\documentclass[12pt]{amsart}
\usepackage{url,cite,hyperref,amsmath,amsthm,amssymb,mathtools}
\usepackage[margin=1.25in]{geometry}

%=======================================
%========= Alternate Preamble ==========
%=======================================

%=========== Theorem styles ============

%\swapnumbers

\theoremstyle{plain}
\newtheorem{theorem}[subsection]{Theorem}
\newtheorem*{theorem*}{Theorem}
\newtheorem{corollary}[subsection]{Corollary}
\newtheorem{question}[subsection]{Question}
\newtheorem*{question*}{Question}

\newtheorem{lemma}[subsection]{Lemma}
\newtheorem*{lemma*}{Lemma}
\newtheorem{proposition}[subsection]{Proposition}
\newtheorem*{proposition*}{Proposition}
\theoremstyle{definition}
\newtheorem{definition}[subsection]{Definition}
\newtheorem*{definition*}{Definition}
\newtheorem{remark}[subsection]{Remark}

\newtheorem*{example*}{Example}
\newtheorem*{notation*}{Notation}

\newtheoremstyle{myclaim}% name
  {1ex}%      Space above
  {1ex}%      Space below
  {\it}%         Body font
  {\parindent}%         Indent amount (empty = no indent, \parindent = para indent)
  {\it}% Thm head font
  {.}%        Punctuation after thm head
  { }%     Space after thm head: " " = normal interword space;
        %       \newline = linebreak
  {}%         Thm head spec (can be left empty, meaning `normal')
\theoremstyle{myclaim}

\newtheorem*{claim*}{Claim}

\newtheoremstyle{note}% name
  {}%      Space above
  {}%      Space below
  {}%         Body font
  {}%         Indent amount (empty = no indent, \parindent = para indent)
  {\normalfont}% Thm head font
  {.}%        Punctuation after thm head
  {.5em}%     Space after thm head: " " = normal interword space;
        %       \newline = linebreak
  {}%         Thm head spec (can be left empty, meaning `normal')

\newtheoremstyle{citing}% name
  {}%      Space above, empty = `usual value'
  {}%      Space below
  {\itshape}% Body font
  {}%         Indent amount (empty = no indent, \parindent = para indent)
  {\bfseries}% Thm head font
  {.}%        Punctuation after thm head
  {.5em}%     Space after thm head: " " = normal interword space;
        %       \newline = linebreak
  {\thmnote{#3}}% Thm head spec

\theoremstyle{citing}
\newtheorem*{citethm}{}% all text supplied in the note

%=========== Useful commands ===========

\newcommand{\Z}{\ensuremath{\mathbb{Z}}}

\newcommand{\C}{\ensuremath{\mathbb{C}}}
\newcommand{\T}{\ensuremath{\mathbb{T}}}

\newcommand{\norm}[1]{\ensuremath{\| #1 \|}}

\renewcommand{\epsilon}{\varepsilon}

%% No double spaces after periods
\frenchspacing

%% No typewriter fonts for URLs
\urlstyle{same}

%% Makes links out of Math. Review #'s
% \renewcommand{\MR}[1]{%
%   \relax\ifhmode\unskip\spacefactor3000 \space\fi
%   MR~\MRhref{#1}{#1}
% }

%% my purdue address

% use for figures wider than the text width
\makeatletter
\newcommand*{\centerfloat}{%
  \parindent \z@
  \leftskip \z@ \@plus 1fil \@minus \textwidth
  \rightskip\leftskip
  \parfillskip \z@skip}
\makeatother
  
% fixes spacing with \left and \right brackets etc.
\let\originalleft\left
\let\originalright\right
\renewcommand{\left}{\mathopen{}\mathclose\bgroup\originalleft}
\renewcommand{\right}{\aftergroup\egroup\originalright}

%%% ----------------------------------
%%% Document specific macros
%%% ----------------------------------

\DeclareMathOperator{\Tr}{Tr}
\DeclareMathOperator{\GL}{GL}
\DeclareMathOperator{\Ind}{Ind}
\DeclareMathOperator{\rank}{rank}
\DeclareMathOperator{\qd}{qd}

\newcommand{\ZH}{\mathbb{Z}\left[\frac{1}{p}\right]}

\newcommand{\V}{\Vert}

\newcommand{\la}{\langle}
\newcommand{\ra}{\rangle}
\newcommand{\XQD}{\ensuremath{X_{\text{QD}}}}

%%% ----------------------------------
%%% Begin Document
%%% ----------------------------------

\begin{document}

\title{%
  On groups with quasidiagonal C*-algebras
}

\author{Jos\'e R. Carri\'on}
\email{jcarrion@math.purdue.edu}
\author{Marius Dadarlat}
\email{mdd@math.purdue.edu}
\author{Caleb Eckhardt}
\email{eckharc@muohio.edu}
\thanks{C.E.\ was partially supported by NSF grant DMS-1101144}
\thanks{M.D.\ was partially supported by NSF grant DMS-1101305}
\address{%
  Department of Mathematics,
  Purdue University,
  West Lafayette, IN, 47907,
  United States}
\address{%
  Department of Mathematics,
  Miami University,
  Oxford, OH, 45056,
  United States}

\date{April 5, 2012.}

\begin{abstract}
  We examine the question of quasidiagonality for C*-algebras of
  discrete amenable groups from a variety of angles.  We give a
  quantitative version of Rosenberg's theorem via paradoxical
  decompositions and a characterization of quasidiagonality for group
  C*-algebras in terms of embeddability of the groups.  We consider
  several notable examples of groups, such as topological full groups
  associated with Cantor minimal systems and Abels' celebrated example
  of a finitely presented solvable group that is not residually
  finite, and show that they have quasidiagonal C*-algebras.  Finally,
  we study strong quasidiagonality for group C*-algebras, exhibiting
  classes of amenable groups with and without strongly quasidiagonal
  C*-algebras.
\end{abstract}

\maketitle

\section{Introduction}

In \cite{Lance73} Lance provided a C*-algebraic characterization of
amenability for discrete groups by proving that a discrete group
$\Gamma$ is amenable if and only if its reduced C*-algebra,
$C^*_r(\Gamma)$ is nuclear. Later Rosenberg showed \cite{Hadwin87}
that if $C^*_r(\Gamma)$ is quasidiagonal (see Definition
\ref{def:QD}), then $\Gamma$ is amenable, a result which has
absolutely no analog for general C*-algebras (see \cite{Dadarlat00}).
The converse to Rosenberg's theorem remains open, namely: if $\Gamma$
is a discrete, amenable group, is $C^*_r(\Gamma)$ quasidiagonal
\cite{Voiculescu93}?

The question of quasidiagonality for amenable groups is tantalizing
for a number of reasons.  First, quasidiagonality displays certain
``topological'' properties, such as homotopy invariance
\cite{Voiculescu91}.  On the other hand, one might describe
amenability as a ``measure theoretic'' property, as one can detect
amenability of $\Gamma$ in the von Neumann algebras it generates.
Hence an affirmative answer would provide a nice topological
characterization of amenability to complement its measure theoretic
description.  Second, this question is a critical test case for a
number of other open questions.  Indeed, it is not known if every
separable, nuclear and stably finite C*-algebra is quasidiagonal (a
question with important implications for the classification program)
and, much more generally, if every stably finite C*-algebra is an MF
algebra. Thus an answer to the above question concerning groups will
either provide a chain of counterexamples or some evidence to the
validity of the more general conjectures.

There are some known converses to Rosenberg's theorem.  Recall that a
group $\Gamma$ is \emph{maximally almost periodic} (MAP) if it embeds
into a compact group.  Because the C*-algebra of an amenable MAP group
is residually finite dimensional \cite{Bekka90}, it follows that the
C*-algebra of an amenable group that is the union of residually finite
groups must be quasidiagonal.  We generalize this result in
Section~\ref{subsec:group-theor-descr}.

Our main results are the following.  First, if $\Gamma$ is not
amenable, then the modulus of quasidiagonality of $C_r^*(\Gamma)$ is
controlled by the number of pieces in a paradoxical decomposition of
$\Gamma$ (Theorem~\ref{thm:qrosenberg}).  Second, if $\Gamma$ is
amenable, then $C^*(\Gamma)$ is quasidiagonal if and only if $\Gamma$
embeds in the unitary group of $\prod_{n=1}^\infty M_n(\C) \big/
\sum_{n=1}^\infty M_n(\C)$ (Theorem~\ref{thm:MF=qd2}).  We expand this
class of groups beyond the class of LEF groups of
\cite{Vershik-Gordon97}.  Third, if $\Gamma$ and $\Lambda$ are
amenable groups such that $\Gamma$ is non-torsion and $\Lambda$ has a
finite dimensional representation other than the trivial one, then
$C^*(\Lambda \wr \Gamma)$ has a non-finite quotient and therefore
cannot be strongly quasidiagonal (Theorem~\ref{thm:nonSQD}).

\subsection{Organization of the paper}

In Section~\ref{subsec:quant-vers-rosenberg} we revisit Rosenberg's
previously mentioned result.  His result implies that the modulus of
quasidiagonality \cite{Pimsner-Popa-etal79} does not vanish for some
finite subset of a non-amenable group.  The modulus of
quasidiagonality measures how badly a C*-algebra violates
quasidiagonality.  We estimate this number and a closely related one
using paradoxical decompositions, and give some calculations for free
groups.

In Section~\ref{subsec:group-theor-descr} we consider an approximate
version of MAP for groups that characterizes quasidiagonality for
discrete amenable groups.  We call groups with this property MF due to
their connection with Blackadar and Kirchberg's MF algebras
\cite{Blackadar-Kirchberg97}.  We then show that the groups that are
locally embeddable into finite groups in the sense of Vershik and
Gordon \cite{Vershik-Gordon97} (so-called LEF groups) are MF groups.
Kerr had already proved that the C*-algebra of an amenable LEF group
is quasidiagonal \cite{Kerr11}.

In Section~\ref{subsec:mf-but-not-lef}, we use our characterization of
quasidiagonality for amenable groups to give examples of solvable
groups that are not LEF but have quasidiagonal C*-algebras.  These
groups are well known examples due to Abels of finitely presented
solvable groups that are not residually finite.

Finally, in section \ref{sec:strong-quas-groups} we discuss groups and
strong quasidiagonality (see Definition~\ref{def:QD}).
Theorem~\ref{thm:nonSQD} provides examples of group C*-algebras that
are not strongly quasidiagonal, such as the C*-algebra of the
lamplighter group.  Section~\ref{subsec:strongly-quas-groups} exhibits
some classes of nilpotent groups that have strongly quasidiagonal
C*-algebras.

\subsection{Some consequences}
\label{subsec:some-consequences}

Let $X$ be the Cantor set and $T$ a minimal homeomorphism of $X$.  The
\emph{topological full group} $[[T]]$ is the group of all
homeomorphisms of $X$ that are locally equal to an integer power of
$T$.  These groups are of interest for several reasons.  For example,
they are complete invariants for flip conjugacy
\cite{Giordano-Putnam-etal99} and studying their properties as
abstract groups led to the first examples of infinite, simple,
amenable groups that are finitely generated \cite{Matui06,
  Grigorchuk-Medynets, Juschenko-Monod12}.  It follows from the
results of Section~\ref{subsec:group-theor-descr} that the C*-algebra
of $[[T]]$ must be quasidiagonal, since $[[T]]$ is LEF by
\cite{Grigorchuk-Medynets} and amenable by \cite{Juschenko-Monod12}.
Since the previously mentioned examples of infinite, simple, amenable
and finitely generated groups arise as commutator subgroups of
topological full groups associated to certain Cantor minimal systems,
their C*-algebras are quasidiagonal as well.

On the other hand, an example of Abels provides an amenable group that
is not LEF.  We observe that if a group is not LEF, then it cannot be
a union of residually finite groups and one cannot obtain
quasidiagonality based on the result of Bekka mentioned above.
However, we will see in Section~\ref{subsec:mf-but-not-lef} that the
$C^*$-algebra of Abels' example has a quasidiagonal C*-algebra.

\subsection{Definitions and notation}
\label{subsec:preliminaries}

For completeness we record the definition of quasidiagonality.  We
refer the reader to the survey article \cite{Brown04} for more
information on quasidiagonality.

\begin{definition}
  \label{def:QD}
  Let $H$ be a separable Hilbert space.  A (separable) set
  $\Omega\subset B(H)$ is \emph{quasidiagonal} if there is an
  increasing sequence of (self-adjoint) projections $(P_n) \subset
  \mathcal{K}(H)$ with $P_n \to 1_H$ strongly and such that $\norm{
    [P_n, T] } \to 0$ for every $T\in \Omega$.  (We write $[S,T]$ for
  the commutator $ST - TS$.)

  A separable $C^*$-algebra $A$ is \emph{quasidiagonal} if it has a
  faithful representation as a set of quasidiagonal operators. We say
  $A$ is \emph{strongly quasidiagonal} if $\sigma(A)$ is a
  quasidiagonal set of operators for every representation $\sigma$ of
  $A$.
\end{definition}

\begin{theorem}[Voiculescu \cite{Voiculescu91}]
  \label{thm:voiculescu-charac-qd}
  A separable $C^*$-algebra is quasidiagonal if and only if there
  exists a sequence of contractive completely positive maps
  $\phi_n\colon A\to M_{k_n}(\C)$ such that $\norm{ \phi_n(a) }\to
  \norm{a}$ and $\norm{ \phi(ab) - \phi(a)\phi(b) } \to 0$ for every
  $a,b \in A$.
\end{theorem}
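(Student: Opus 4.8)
The plan is to prove both implications of Voiculescu's characterization, with the forward direction being routine and the converse carrying the real content.

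For the direction that quasidiagonality yields such maps, fix a faithful representation $\pi\colon A\to B(H)$ for which $\pi(A)$ is a quasidiagonal set, with increasing finite-rank projections $P_n\to 1_H$ strongly and $\norm{[P_n,\pi(a)]}\to 0$. Set $k_n=\rank(P_n)$ and define $\phi_n(a)=P_n\pi(a)P_n$, regarded as a map $A\to M_{k_n}(\C)$ after identifying $P_nH\cong\C^{k_n}$. Each $\phi_n$ is contractive and completely positive, being a compression of a $*$-homomorphism. Asymptotic multiplicativity follows from the identity $\phi_n(ab)-\phi_n(a)\phi_n(b)=P_n\pi(a)(1-P_n)\pi(b)P_n=-P_n\pi(a)[P_n,\pi(b)]P_n$, whose norm is at most $\norm{a}\,\norm{[P_n,\pi(b)]}\to 0$. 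For the norm condition, joint strong continuity of multiplication on bounded sets gives $P_n\pi(a)P_n\to\pi(a)$ strongly, hence $\norm{a}=\norm{\pi(a)}\le\liminf_n\norm{\phi_n(a)}$; combined with $\norm{\phi_n(a)}\le\norm{a}$ this yields $\norm{\phi_n(a)}\to\norm{a}$.

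For the converse I would first reduce to the unital case by passing to the unitization $\tilde A$ (which is quasidiagonal precisely when $A$ is) and arranging the $\phi_n$ to be unital. Dilate each $\phi_n$ by Stinespring to a triple $(\pi_n,H_n,V_n)$ with $V_n\colon\C^{k_n}\to H_n$ an isometry and $\phi_n(a)=V_n^*\pi_n(a)V_n$, and form $\pi=\bigoplus_n\pi_n$ on $H=\bigoplus_n H_n$. The asymptotic isometry condition makes $\pi$ faithful: from $\norm{\pi_n(a)}\ge\norm{V_n^*\pi_n(a)V_n}=\norm{\phi_n(a)}\to\norm{a}$ and $\norm{\pi(a)}=\sup_n\norm{\pi_n(a)}$ one obtains $\norm{\pi(a)}=\norm{a}$. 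Writing $p_n=V_nV_n^*$ for the rank-$k_n$ projection onto the range of $V_n$, asymptotic multiplicativity translates into $\norm{p_n\pi_n(a)(1-p_n)\pi_n(b)p_n}\to 0$, so the finite-rank projections $P_n=p_1\oplus\cdots\oplus p_n$ asymptotically commute with $\pi(A)$ on any fixed finite set.

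The main obstacle is that these $P_n$ do not increase to $1_H$: each Stinespring space $H_n$ is typically infinite-dimensional, its complement $(1-p_n)H_n$ is never absorbed, and so $P_n$ converges strongly to $\bigoplus_n p_n\ne 1_H$ rather than to the identity. The tailored small projections witness approximate commutation but fail to exhaust the space, while enlarging them within a single block $H_n$ destroys the commutator estimate since $\pi_n$ need not be quasidiagonal. I would resolve this using Voiculescu's noncommutative Weyl--von Neumann theorem: quasidiagonality of a faithful representation is invariant under approximate unitary equivalence, and a faithful essential representation absorbs the redundant summands, so $\pi$ may be replaced by an approximately unitarily equivalent representation on which the compressions can be enlarged to an increasing sequence converging strongly to the identity while retaining the commutator bounds. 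Verifying that this absorption can be carried out compatibly with the finite subsets and tolerances exhausting $A$ — equivalently, packaging the local finite-dimensional approximations into a single quasidiagonalizing sequence — is the step I expect to require the most care.
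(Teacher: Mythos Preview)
The paper does not prove this statement at all: Theorem~\ref{thm:voiculescu-charac-qd} is merely quoted, with attribution to Voiculescu~\cite{Voiculescu91}, and is used as a black box (in the proofs of Theorem~\ref{thm:MF=qd2} and Lemma~\ref{lem:cont-field-QD-fibers}). There is therefore nothing in the paper to compare your argument against.

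That said, your sketch is essentially the standard proof. The forward direction is exactly as you wrote it. For the converse, your construction of $\pi=\bigoplus_n\pi_n$ via Stinespring and the observation that the range projections $p_n$ asymptotically commute with $\pi_n(A)$ are correct, and you have correctly isolated the real issue: the $P_n$ do not converge strongly to $1_H$. Your proposed fix via the noncommutative Weyl--von Neumann theorem is the right one and is how Voiculescu's original argument proceeds. One clean way to package it: replace $\pi$ by its infinite inflation $\pi^{(\infty)}=\pi\otimes 1_{\ell^2}$, which is faithful and essential; then, given a finite set $\mathcal{F}\subset A$, $\epsilon>0$, and any finite-rank projection $Q$, Voiculescu's theorem lets you move a copy of a suitable $p_n$ (unitarily, and up to compact perturbation) so that its image dominates $Q$ and still $\epsilon$-commutes with $\pi^{(\infty)}(\mathcal{F})$. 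Iterating over an exhaustion of $A$ and an increasing sequence of finite-rank projections yields the required increasing sequence converging strongly to $1$. This last bookkeeping is routine once the absorption statement is in hand, so your assessment that it ``requires the most care'' is fair but perhaps overstated.
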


In this paper we only consider discrete countable groups.  The left
regular representation of a group $\Gamma$ on $B(\ell^2\Gamma)$ maps
$s\in \Gamma$ to the operator $\lambda_s\in B(\ell^2 \Gamma)$ which is
left-translation by $s$.  For $t\in \Gamma$ we write $\delta_t \in
\ell^2\Gamma$ for the characteristic function of the set $\{ t \}$, so
that $\lambda_s \delta_t = \delta_{st}$.  The reduced C*-algebra of
$\Gamma$ is the sub-C*-algebra $C^*_r(\Gamma)$ of $B(\ell^2\Gamma)$
generated by $\lambda(\Gamma)$.  We will usually use $e$ for the
neutral element of a group $\Gamma$ and $Z(\Gamma)$ for its center. We also
write $Z(A)$ for the center of a C*-algebra $A$.

\section{Quasidiagonality and groups}
\label{sec:qd-groups}

\subsection{A quantitative version of Rosenberg's theorem}
\label{subsec:quant-vers-rosenberg}

In \cite{Hadwin87} Rosenberg proved that if a group $\Gamma$ is not
amenable, then $C_r^*(\Gamma)$ is not quasidiagonal.  First we
reformulate his result.

\begin{definition}
  \label{def:CFG}
  Let $\mathcal{P}$ be the set of non-zero finite-rank projections on
  $\ell^2\Gamma$.  Given a finite subset $F\subset \Gamma$, set
  \begin{equation*}
    \label{eq:constantdefn}
    C_{F} :=
    \inf_{P\in \mathcal{P}}
    \sup_{x\in F}
    \|[\lambda_x,P] \|.
  \end{equation*}
\end{definition}

It is clear that if $C^*_r(\Gamma)$ is quasidiagonal, then
$C_{F}=0$ for every finite subset of $\Gamma$. Furthermore, if
$\Gamma$ is amenable, then $\lambda$ has an approximately fixed
vector, so $C_{F}=0$ for all finite subsets as well.  Rosenberg
\cite{Hadwin87} has proved that if $\Gamma$ is not amenable, then
there is a finite subset $F\subseteq \Gamma$ such that
$C_{F}>0$.  In this section we give a quantitative version of
this statement by estimating (and in some cases calculating)
$C_{F}$ using paradoxical decompositions of $\Gamma$.

We point out a very similar concept due to Pimsner, Popa and
Voiculescu \cite{Pimsner-Popa-etal79}.  Recall that the \emph{modulus
  of quasidiagonality} of a set $\Omega\subset B(\ell^2\Gamma)$ is
\begin{displaymath}
  \qd(\Omega) :=
  \liminf_{P\in \mathcal{P}}
  \sup_{T\in \Omega}
  \|[T,P] \|,
\end{displaymath}
where the order on projections is given by $P\leq Q$ if $PQ = P$.
Clearly $C_F \leq \qd(\lambda(F))$.

Recall that a group $\Gamma$ is not amenable if and only if it admits
a \emph{paradoxical decomposition}: that is, there exist pairwise
disjoint subsets $X_1,\dots,X_n,Y_1,\dots,Y_m\subseteq \Gamma $ and
$g_1, \dots, g_n, h_1, \dots, h_m \in \Gamma$ with $g_1 = h_ 1 = e$ such
that
\begin{equation}
  \label{eq:pardec}
  \Gamma=\bigsqcup_{i=1}^n g_i X_i
  = \bigsqcup_{j=1}^m h_j Y_j
  = \bigg( \bigsqcup_{i=1}^n X_i \bigg)
    \sqcup \bigg( \bigsqcup_{j=1}^m Y_j \bigg).
\end{equation}
In this case we say that the paradoxical decomposition has $n+m$
\emph{pieces}.  A non-amenable group always has a paradoxical
decomposition with at least four pieces.  It is well known that a
group contains a copy of the free group on two generators if and only
if one can find a paradoxical decomposition with exactly 4 pieces.
(We refer the reader to \cite{Wagon85} for more information on
paradoxical decompositions.)

We will require an elementary lemma.
\begin{lemma}
  \label{lem:Tracelemma} Let $H$ be a Hilbert space and let $\Tr$
  denote the usual trace on $B(H)$. Let $X=X^*\in B(H)$ be finite rank
  with $\Tr(X)=0$. Then for any $Q\in B(H)$ with $0\leq Q\leq 1$ we
  have
  \begin{equation*}
    |\Tr(QX)|\leq \frac{1}{2}\textup{rank}(X)\V X\V.
  \end{equation*}
\end{lemma}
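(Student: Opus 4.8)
The plan is to reduce everything to a finite-dimensional calculation on the eigenvalues of $X$. First I would invoke the spectral theorem to write $X = \sum_{i=1}^{k} \lambda_i\, |e_i\rangle\langle e_i|$, where $k = \rank(X)$, the $\lambda_i$ are the nonzero eigenvalues of $X$ listed with multiplicity, and $(e_i)_{i=1}^k$ is an orthonormal family. Self-adjointness of $X$ makes each $\lambda_i$ real with $|\lambda_i| \le \V X\V$, and the hypothesis reads $\sum_i \lambda_i = 0$. Since $X$ has finite rank, so does $QX$, and its trace is unambiguous:
\[
  \Tr(QX) = \sum_{i=1}^k \lambda_i \inprod{Q e_i}{e_i}.
\]
Setting $q_i := \inprod{Q e_i}{e_i}$, the assumption $0 \le Q \le 1$ guarantees $q_i \in [0,1]$ for each $i$.

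Next I would treat $\Tr(QX) = \sum_i \lambda_i q_i$ as a linear functional of the vector $(q_1,\dots,q_k)$ on the cube $[0,1]^k$, whose extreme values are attained at the vertices. Taking $q_i = 1$ precisely when $\lambda_i > 0$ gives $\Tr(QX) \le S_+$, where $S_+ := \sum_{\lambda_i > 0} \lambda_i$; taking $q_i = 1$ precisely when $\lambda_i < 0$ gives $\Tr(QX) \ge -S_-$, where $S_- := \sum_{\lambda_i < 0} |\lambda_i|$. Hence $|\Tr(QX)| \le \max\{S_+, S_-\}$. The key step is now to exploit the trace-zero hypothesis: since $\sum_i \lambda_i = 0$ we have $S_+ = S_-$, so each equals half of $\sum_i |\lambda_i| = \V X\V_1$, the trace norm. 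This yields $|\Tr(QX)| \le \tfrac{1}{2}\V X\V_1$, and I would finish with the crude bound $\V X\V_1 = \sum_{i=1}^k |\lambda_i| \le k\,\V X\V = \rank(X)\,\V X\V$.

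There is no serious obstacle here: the entire content is the observation that the trace-zero condition forces the positive and negative eigenvalue masses of $X$ to coincide, which is exactly what produces the factor $\tfrac{1}{2}$; without it one only obtains the weaker bound $\rank(X)\,\V X\V$. The one point deserving care is that $Q$ need not commute with $X$, so one must not attempt to diagonalize $Q$ simultaneously; instead one uses only the diagonal entries $q_i = \inprod{Q e_i}{e_i}$ of $Q$ in the eigenbasis of $X$, for which $0 \le Q \le 1$ alone suffices to give $q_i \in [0,1]$.
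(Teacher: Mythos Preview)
Your proof is correct and takes essentially the same approach as the paper: both decompose $X$ into its positive and negative spectral parts, use $\Tr(X)=0$ to equate their traces (which is exactly what produces the factor $\tfrac{1}{2}$), and use $0\le Q\le 1$ to sandwich $\Tr(QX)$ between $-\Tr(X_-)$ and $\Tr(X_+)$. The only cosmetic difference is that the paper phrases this via the operator decomposition $X=X_+-X_-$ and the inequality $\Tr(Q^{1/2}X_\pm Q^{1/2})\le \Tr(X_\pm)$, whereas you work directly with eigenvalues and the diagonal entries $q_i=\langle Qe_i,e_i\rangle$.
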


\begin{proof}
  If $Y$ is a finite rank operator, then $\Tr(Y)\leq \rank(Y)\|Y\|$.
  Indeed, if $E$ is a projection onto the range of $Y$, then $\Tr(Y) =
  \Tr(EY)\leq \Tr(E)\|Y\| = \rank(Y)\|Y\|$.

  Write $X = X_{+} - X_{-}$ with $X_\pm \geq 0$ and $X_{+}X_{-} = 0$.
  Then $\Tr(X_{\pm})\leq \rank(X_\pm)\|X_\pm\|\leq \rank(X_\pm) \|X\|$.
  Since $\rank(X) = \rank(X_{+}) + \rank(X_{-})$ and $\Tr(X_{+}) =
  \Tr(X_{-})$ we obtain
  \[
  \Tr(X_{\pm}) \leq \frac{1}{2}\rank(X)\|X\|.
  \]
  Now, since
  \[
  \Tr(QX) = \Tr( Q^{1/2}X_{+}Q^{1/2} ) - \Tr( Q^{1/2}X_{-}Q^{1/2} )
  \]
  and $\Tr( Q^{1/2}X_{\pm}Q^{1/2} )\leq \|Q\|\Tr( X_{\pm} )\leq
  \Tr( X_{\pm} ),$ it follows that
  \[
  -\frac{1}{2}\rank(X)\|X\|
  \leq - \Tr(X_{-}) \leq \Tr(QX)\leq \Tr(X_{+})
  \leq \frac{1}{2}\rank(X)\|X\|.
  \]
\end{proof}

\begin{theorem}
  \label{thm:qrosenberg}
  Suppose $\Gamma$ is a non-amenable group with a paradoxical
  decomposition as in (\ref{eq:pardec}).  If $F=\{ g_1,\dots,g_n,
  h_1,\dots,h_m \}$, then
  \begin{equation*}
    C_{F}\geq \frac{1}{n+m-2}.
  \end{equation*}
  In particular, if $\Gamma$ contains $\mathbb{F}_2$, then $C_{F}\geq
  1/2$ by choosing a minimal decomposition with four pieces.

  Since $\qd(\lambda(F)) \geq C_F$ we have the same statement for the
  modulus of quasidiagonality of $\lambda(F)$ instead of $C_F$.
\end{theorem}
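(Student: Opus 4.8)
The plan is to prove the stronger pointwise statement: for \emph{every} nonzero finite-rank projection $P$ on $\ell^2\Gamma$ one has $\sup_{x\in F}\V[\lambda_x,P]\V \geq 1/(n+m-2)$. Taking the infimum over $P\in\mathcal{P}$ then gives the bound on $C_F$, and the statement for $\qd(\lambda(F))$ is immediate from $C_F\leq \qd(\lambda(F))$ noted above. So I would fix such a $P$, write $r=\Tr(P)=\rank(P)>0$ and $\epsilon=\sup_{x\in F}\V[\lambda_x,P]\V$, and use the trace to measure how much of $P$ ``sits over'' each piece of the decomposition. For $A\subseteq\Gamma$ let $M_A\in B(\ell^2\Gamma)$ be the projection onto $\ell^2(A)$, i.e.\ multiplication by the indicator of $A$. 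A direct computation gives $\lambda_g M_A \lambda_g^*=M_{gA}$, and the three equalities in (\ref{eq:pardec}) become the operator identities
\begin{equation*}
  \sum_{i=1}^n M_{g_iX_i} = \sum_{j=1}^m M_{h_jY_j}
  = \sum_{i=1}^n M_{X_i} + \sum_{j=1}^m M_{Y_j} = 1.
\end{equation*}
Applying $\Tr(P\,\cdot\,)$ and setting $a=\sum_i \Tr(PM_{X_i})$ and $b=\sum_j \Tr(PM_{Y_j})$, these yield
\begin{equation*}
  \sum_{i=1}^n \Tr(PM_{g_iX_i}) = \sum_{j=1}^m \Tr(PM_{h_jY_j}) = r,
  \qquad a+b=r.
\end{equation*}

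The heart of the argument is the per-index estimate $\bigl|\Tr(PM_{g_iX_i})-\Tr(PM_{X_i})\bigr|\leq r\,\V[\lambda_{g_i},P]\V$. To get it I would set $X=\lambda_{g_i}^*P\lambda_{g_i}-P$; using the trace property together with $\lambda_{g_i}M_{X_i}\lambda_{g_i}^*=M_{g_iX_i}$ one checks that $\Tr(XM_{X_i})=\Tr(PM_{g_iX_i})-\Tr(PM_{X_i})$. Now $X=X^*$ is finite rank with $\Tr(X)=0$ (both projections have rank $r$), $\rank(X)\leq 2r$, and since $X=-\lambda_{g_i}^*[\lambda_{g_i},P]$ with $\lambda_{g_i}$ unitary, $\V X\V=\V[\lambda_{g_i},P]\V$. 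Taking $Q=M_{X_i}$ in Lemma~\ref{lem:Tracelemma} gives $|\Tr(XM_{X_i})|\leq \tfrac12\rank(X)\V X\V\leq r\,\V[\lambda_{g_i},P]\V$, as desired; the same estimate holds for each pair $(h_j,Y_j)$.

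Finally I would sum these estimates. Because $g_1=h_1=e$, the $i=1$ and $j=1$ commutators vanish, so only $n-1$ and $m-1$ terms contribute; combining with the trace identities gives $r-a\leq(n-1)r\epsilon$ and $r-b\leq(m-1)r\epsilon$. Adding these and using $a+b=r$ yields $r\leq (n+m-2)\,r\epsilon$, so $\epsilon\geq 1/(n+m-2)$ after dividing by $r>0$. The $\F_2$ case then follows by choosing a four-piece decomposition. I expect the only delicate points to be bookkeeping ones rather than analytic: exploiting the normalization $g_1=h_1=e$ to discard two pieces (which is precisely what turns $n+m$ into $n+m-2$), and verifying that the factor $\tfrac12$ in Lemma~\ref{lem:Tracelemma} cancels against $\rank(X)\leq 2r$ to leave the clean constant $r$. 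Everything else is the routine translation of (\ref{eq:pardec}) into trace identities.
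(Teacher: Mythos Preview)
Your proof is correct and follows essentially the same approach as the paper: both fix a finite-rank projection $P$, translate the paradoxical decomposition into trace identities, invoke Lemma~\ref{lem:Tracelemma} to bound $|\Tr(PM_{g_iX_i})-\Tr(PM_{X_i})|$ by $\rank(P)\cdot\V[\lambda_{g_i},P]\V$, and then sum (discarding the $g_1=h_1=e$ terms) to force $\epsilon\geq 1/(n+m-2)$. The only cosmetic difference is that the paper applies Lemma~\ref{lem:Tracelemma} with $Q=P_{g_iX_i}$ and $X=P-\lambda_{g_i}P\lambda_{g_i}^{-1}$, whereas you take $Q=M_{X_i}$ and $X=\lambda_{g_i}^*P\lambda_{g_i}-P$; these are conjugate choices yielding the same estimate.
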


\begin{proof}
  For each subset $A\subseteq \Gamma$, let $P_A$ be the projection
  onto $\overline{\textrm{span}}\{\delta_a:a\in A \}$. Let $\Tr$ denote
  the usual semi-finite trace on $B(\ell^2\Gamma)$ and let $P\in
  B(\ell^2\Gamma)$ be a finite-rank projection of rank $k\geq 1$. Suppose
  that $\V [\lambda_x,P] \V\leq \epsilon$ for all $x\in F$. We prove
  $\epsilon\geq \frac{1}{n+m-2}$.

  Let $1\leq i\leq n$. By Lemma \ref{lem:Tracelemma},
  \begin{equation}
    \label{eq:tracestimate}
    \big| \Tr\bigl( P_{g_iX_i}(P-\lambda_{g_i}P\lambda_{g_i^{-1}}) \big) \bigr|
    \leq k\epsilon.
  \end{equation}
  Because $\lambda_{g_i} P_{X_i} \lambda_{g_i^{-1}} = P_{g_i X_i}$, we have
  \[
  \Tr(P_{g_iX_i}P)
  =
  % \Tr( \lambda_{g_i}P_{X_i}P\lambda_{g_i^{-1}} )
  \Tr( P_{X_i}P )
    + \Tr\big( P_{g_iX_i}(P-\lambda_{g_i}P\lambda_{g_i^{-1}}) \big).
  \]
  From this and the estimate (\ref{eq:tracestimate}) it follows that
  for each $2\leq i\leq n$
  \begin{equation}
    \label{eq:giXiineq}
    \Tr(P_{g_iX_i}P) \leq \Tr(P_{X_i}P) +k\epsilon.
  \end{equation}
  Let $X=\cup X_i$ and $Y=\cup Y_j$. By (\ref{eq:giXiineq}) and the
  fact that $g_1=e$ we obtain
  \begin{align}
    \label{eq:Xtrace}
    k = \Tr(P)
      & =\Tr(P_{X_1}P)
          + \sum_{i=2}^n \Tr(P_{g_iX_i}P)\\ \notag
      & \leq \Tr(P_{X_1}P)+\sum_{i=2}^n \Tr(P_{X_i}P)
          + (n-1)k\epsilon\\ \notag
      & = \Tr(P_XP)+(n-1)k\epsilon.
  \end{align}
  From a similar calculation involving the $h_i$'s and $Y_i$'s we see
  that
  \begin{equation}
    \label{eq:Ytrace}
    k\leq \Tr(P_YP)+(m-1)k\epsilon.
  \end{equation}
  Finally, add up (\ref{eq:Xtrace}) and (\ref{eq:Ytrace}) to obtain
  the conclusion.
\end{proof}

Now we calculate $C_{F}$ when $\Gamma=\mathbb{F}_2$. Let us fix some
notation first.  Let $a,b$ be generators of $\mathbb{F}_2$ and for
each word $w\in\mathbb{F}_2$ let $|w|$ denote the word length of $w$
with respect to the generating set $\{ a, b, a^{-1}, b^{-1} \}$.  For
each $n\geq0$ let $S_n$ denote the sphere of radius $n$, that is,
\begin{equation*}
  S_n=\{ w\in \mathbb{F}_2: |w|=n \}.
\end{equation*}
Note that $ S_0=\{e\}$. For each $x\in \{ a, b, a^{-1}, b^{-1} \}$,
let $S_n^x$ denote those elements of $S_n$ whose first letter is $x$.
It is easy to see that
\begin{equation}
  \label{eq:comb}
  |S_n| = 4\cdot 3^{n-1}
  \quad \textrm{ and } \quad
  |S_n^x| = 3^{n-1}
  \quad \textrm{ for }n \geq 1.
\end{equation}
It is well known that (see \cite[Theorem 4.2]{Wagon85}) there is a
paradoxical decomposition of $\mathbb{F}_2$ as
\begin{equation}
  \label{eq:paradec}
  \mathbb{F}_2
  = X_1\sqcup aX_2
  = Y_1\sqcup b Y_2
  = X_1\sqcup X_2\sqcup Y_1\sqcup Y_2 .
\end{equation}

\begin{theorem}
 % Consider a paradoxical decomposition of $\mathbb{F}_2$ as in
  %(\ref{eq:paradec}).
  For any $\epsilon>0$ and any $n\geq 1$ there is a projection $P\in
  B(\ell^2\mathbb{F}_2)$ of rank $n$ such that $\V [\lambda_a,P] \V<
  1/2+\epsilon$ and $\V [\lambda_b,P] \V<1/2+\epsilon$. In particular,
  \begin{equation*}
    C_{\{ a,b\}} = 1/2.
  \end{equation*}
\end{theorem}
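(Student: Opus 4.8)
The plan is to establish the upper bound $C_{\{a,b\}}\le 1/2$ by exhibiting, for each $n$, a rank-$n$ projection whose commutators with $\lambda_a$ and $\lambda_b$ are close to $1/2$; combined with the lower bound $C_{\{a,b\}}\ge 1/2$ supplied by Theorem~\ref{thm:qrosenberg}, this gives the equality. I would first settle the rank-one case and then build higher ranks from orthogonal copies. The starting point is the elementary identity that for a unit vector $\xi$ and the rank-one projection $P=\inprod{\xi}{\,\cdot\,}\xi$ one has
\[
  \V [\lambda_x, P] \V = \sqrt{1 - |\inprod{\xi}{\lambda_x\xi}|^{2}},
\]
which follows by writing $\lambda_x P - P\lambda_x = (1-P)\lambda_x P - P\lambda_x(1-P)$, noting that the two summands have mutually orthogonal ranges and domains, and computing $\V(1-P)\lambda_x\xi\V^{2} = 1 - |\inprod{\xi}{\lambda_x\xi}|^{2}$. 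Thus it suffices to produce unit vectors $\xi_N$ with $\inprod{\xi_N}{\lambda_a\xi_N}$ and $\inprod{\xi_N}{\lambda_b\xi_N}$ tending to $\sqrt{3}/2$, since then $\sqrt{1-3/4}=1/2$.

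The natural candidates are truncated critically-decaying radial vectors: let $g_N=\sum_{|w|\le N}3^{-|w|/2}\delta_w$ and $\xi_N = g_N/\V g_N\V$. The exponent $1/2$ is forced by Kesten's theorem: $3^{-|w|/2}$ is the critical decay at which a radial function becomes a generalized top eigenfunction of the adjacency operator $A=\lambda_a+\lambda_{a^{-1}}+\lambda_b+\lambda_{b^{-1}}$, whose norm is $2\sqrt{3}$. Using the sphere sizes \eqref{eq:comb}, I would compute $\inprod{g_N}{A g_N}$ and $\V g_N\V^{2}$ level by level and check that both grow linearly in $N$ with ratio tending to $2\sqrt 3$; hence the Rayleigh quotient $\inprod{\xi_N}{A\xi_N}\to 2\sqrt 3 = \V A\V$. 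Because $\xi_N$ is radial it is fixed by the length-preserving automorphisms of $\mathbb{F}_2$ that permute $\{a,a^{-1},b,b^{-1}\}$, so the four inner products $\inprod{\xi_N}{\lambda_x\xi_N}$ are all equal and nonnegative; each therefore equals $\tfrac14\inprod{\xi_N}{A\xi_N}\to \sqrt{3}/2$. The displayed identity then yields $\V[\lambda_a,P_N]\V,\,\V[\lambda_b,P_N]\V\to 1/2$, settling the rank-one case.

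For arbitrary $n$ I would take $n$ orthogonal copies, writing $B_N=\{w:|w|\le N\}$ for the ball. Since the left and right regular representations commute and each $\rho_t$ is unitary, $\rho_t^{*}\lambda_x\rho_t=\lambda_x$, so conjugating $P_N$ by $\rho_t$ leaves the commutator norms unchanged, $\V[\lambda_x,\rho_t P_N\rho_t^{*}]\V=\V[\lambda_x,P_N]\V$, while translating the support of the underlying vector to $B_N t^{-1}$. Choosing $t_1,\dots,t_n$ so that the enlarged supports $B_{N+1}t_j^{-1}$ are pairwise disjoint (possible since $\mathbb{F}_2$ is infinite), the vectors $\eta_j=\rho_{t_j}\xi_N$ are orthonormal, and each operator $[\lambda_x,\inprod{\eta_j}{\,\cdot\,}\eta_j]$ has domain and range contained in $B_{N+1}t_j^{-1}$ (for $x\in\{a^{\pm1},b^{\pm1}\}$). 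Hence these act on mutually orthogonal subspaces, and $P=\sum_{j=1}^{n}\inprod{\eta_j}{\,\cdot\,}\eta_j$ is a rank-$n$ projection with $\V[\lambda_x,P]\V=\max_j\V[\lambda_x,P_N]\V=\V[\lambda_x,P_N]\V$, still below $1/2+\epsilon$ once $N$ is large.

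The main obstacle is the spectral computation of the second paragraph: one must verify not merely that $\inprod{\xi_N}{\lambda_a\xi_N}$ is bounded away from the trivial value but that it converges to the exact constant $\sqrt 3/2$, i.e.\ that the Rayleigh quotient of $A$ along $\xi_N$ realizes the Kesten norm $2\sqrt 3$ in the limit. This is what pins down the decay rate $3^{-|w|/2}$ and requires the explicit level-by-level count from \eqref{eq:comb}; the symmetry argument splitting the Rayleigh quotient evenly among the four generators is then what converts the spectral-radius statement into the needed estimate on the individual inner products.
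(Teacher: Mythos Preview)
Your proof is correct, and the rank-one construction is essentially the paper's: the same radial vector with $3^{-|w|/2}$ decay (the paper writes it as $\alpha_i=(|S_i|n)^{-1/2}$, which is proportional to $3^{-i/2}$), the same identity $\|[U,P]\|^2=1-|\langle U\xi,\xi\rangle|^2$ for rank-one $P$, and the same level-by-level count yielding $\langle\lambda_a\xi,\xi\rangle\to\sqrt{3}/2$. Your packaging via the adjacency operator $A$ and the generator-permuting automorphisms is a clean way to say what the paper does by computing $\langle\lambda_a\xi,\xi\rangle$ directly and invoking symmetry for $b$.

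The one genuine difference is the passage to arbitrary rank~$n$. The paper invokes Voiculescu's Weyl--von~Neumann theorem: since $C^*_r(\mathbb{F}_2)$ has no nonzero compact operators in the regular representation, $\lambda$ is approximately unitarily equivalent to $\lambda\otimes 1_n$, so a rank-one projection $P$ yields a rank-$n$ projection $P\otimes 1_n$ with the same commutator norms, and one transports this back by an approximate conjugacy. Your route---conjugating by right translations $\rho_{t_j}$ to produce $n$ copies with pairwise disjoint enlarged supports $B_{N+1}t_j^{-1}$, so that the commutators decouple---is more elementary and self-contained; it avoids the appeal to Voiculescu's theorem at the cost of a short bookkeeping argument about supports. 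The paper's route is shorter to state but imports a nontrivial black box; yours makes the mechanism completely explicit.
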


\begin{proof}
  By Voiculescu's Weyl-von Neumann type theorem, $\lambda\colon
  \mathbb{F}_2\to B(\ell^2\mathbb{F}_2)$ is approximately unitarily
  equivalent to $\lambda\otimes 1_n\colon \mathbb{F}_2\to
  B(\ell^2\mathbb{F}_2\otimes \C^n)$.  On the other hand
  $\|[\lambda_x\otimes 1_n,P\otimes 1_n]\|=\|[\lambda_x,P]\|$ for
  $P\in B(\ell^2\mathbb{F}_2)$.  It follows that it suffices to prove
  the first part of the statement for $n=1$.

  Let $P\in B(\ell^2\mathbb{F}_2)$ be any projection and $U\in
  B(\ell^2\mathbb{F}_2)$ a unitary.  Since in a C*-algebra
  $\|x^*x\|=\|x\|^2$, using the identity $[U,P]=(1-P)UP-PU(1-P)$ we
  see that $\V [U,P] \V=\textrm{max}\{ \V PU(1-P) \V, \V (1-P)UP \V
  \}$. Moreover, if $P$ is rank 1 and $\xi$ is a norm one vector in
  its range, then
  \begin{align*}
  \V (1-P)UP \V^2 & =\V (1-P)UP\xi \V^2=\V (1-P)U\xi \V^2\\
  &=\|U\xi\|^2-\|PU\xi\|^2=1-|\la U(\xi),\xi \ra|^2.
  \end{align*}
  From the above observations it now suffices to find, for each
  $\epsilon>0$, a norm 1 vector $\xi\in \ell^2\mathbb{F}_2$ such that
  \begin{equation}
    \label{eq:biggiship}
    |\la \lambda_x(\xi),\xi \ra|
    > \frac{\sqrt{3}}{2}-\epsilon
    \quad \textrm{ for }\quad
    x\in \{ a,a^{-1},b,b^{-1}\}.
  \end{equation}
  Let $n>\sqrt{3}/2\epsilon$.
  Define %$\alpha_i=\big( (4n)3^{i-1} \big)^{-1/2}$ for $i=1,\dots,n$
  $\alpha_i=(|S_i|n)^{-1/2}$
  and
  \begin{equation*}
    \xi = \sum_{i=1}^n \alpha_i \bigg(  \sum_{x\in S_i} \delta_x \bigg).
  \end{equation*}
  It is clear that $\V \xi \V=1$. We then have
  \begin{align*}
    \la \lambda_a(\xi),\xi \ra
      & = \sum_{i=1}^{n}\sum_{j=1}^{n} \bigg\la \sum_{x\in S_i}\alpha_i\delta_{ax},\sum_{y\in S_j}\alpha_j\delta_y \bigg\ra\\
      & = \sum_{i=1}^{n}\sum_{j=1}^{n} \bigg\la  \sum_{x\in (S_{i-1}\setminus S_{i-1}^a)\cup S_{i+1}^a}\alpha_i\delta_x, \sum_{y\in S_j}\alpha_j\delta_y \bigg\ra \\
      & = \sum_{i=1}^{n} \bigg\la  \sum_{x\in (S_{i-1}\setminus S_{i-1}^a)\cup S_{i+1}^a}\alpha_i\delta_x, \sum_{y\in S_{i-1}}\alpha_{i-1}\delta_y+\sum_{z\in S_{i+1}}\alpha_{i+1}\delta_z \bigg\ra \\
      & = \sum_{i=1}^{n}\big( \alpha_i\alpha_{i-1}|S_{i-1}\setminus S_{i-1}^a|+\alpha_i\alpha_{i+1}|S_{i+1}^a|\, \big)\\
      & \geq \sum_{i=2}^{n}\bigg(\frac{1}{4n\sqrt{3^{i-1}}\sqrt{3^{i-2}}}(3)(3^{i-2})
      +\frac{1}{4n\sqrt{3^i}\sqrt{3^{i-1}}}3^i\bigg)\quad \textrm{(by (\ref{eq:comb}))}\\
      & = \frac{\sqrt{3}}{2}-\frac{\sqrt{3}}{2n}>\frac{\sqrt{3}}{2}-\epsilon.
  \end{align*}
  The corresponding inequality for $\lambda_b$ follows by symmetry.
  Since $|\la \lambda_x(\xi),\xi \ra| = |\la \lambda_{x^{-1}}(\xi),\xi
  \ra|$, this proves (\ref{eq:biggiship}). We complete the proof by
  applying Theorem~\ref{thm:qrosenberg} to the paradoxical
  decomposition of $\mathbb{F}_2$ given in (\ref{eq:paradec}).
\end{proof}

\subsection{A characterization of quasidiagonality for amenable
  groups}
\label{subsec:group-theor-descr}

For each increasing sequence $\vec{n}=(n_k)$ of positive integers, we
consider the C*-algebra
\[
Q_{\vec{n}}= \prod_k M_{n_k}(\C) \Big/ \sum_k M_{n_k}(\C).
\]
Here the C*-algebra $\prod_k M_{n_k}(\C)$ consists of all sequences
$(a_k)$ of matrices $a_k\in M_{n_k}(\C)$ such that
$\sup_k\|a_k\|<\infty$ and $\sum_k M_{n_k}(\C)$ is the two-sided
closed ideal consisting of those sequences with the property that
$\lim_{k\to \infty}\|a_k\|=0$.

Recall that a separable C*-algebra is MF if it embeds as a
sub-C*-algebra of $Q_{\vec{n}}$ for some $\vec{n}$, see
\cite{Blackadar-Kirchberg97}.  In analogy with the class of MF
algebras, we make the following definition.

\begin{definition}
  \label{def:mf}
  A countable group $\Gamma$ is \emph{MF} if it embeds in the unitary
  group of $Q_{\vec{n}}$ for some $\vec{n}$.
\end{definition}

It is readily seen that $\Gamma$ is MF if and only if it embeds in
$U(Q_{\vec{n}})$ where $\vec{n} = (1, 2, 3, \dots)$.

Recall that a group $\Gamma$ is called \emph{maximally almost
  periodic} (abbreviated MAP) if it embeds in a compact
group. Equivalently, $\Gamma$ embeds in
\[
U\bigg( \prod^\infty_{k=1}M_k(\C)\bigg)
= \prod^\infty_{k=1} U(k).
\]
A discrete residually finite group is MAP.

Bekka \cite{Bekka-Louvet00} proved that if $\Gamma$ is a discrete
countable amenable group, then $\Gamma$ is maximally almost periodic
if and only if $C^*(\Gamma)$ is residually finite dimensional.  That
is,
\begin{displaymath}
  \Gamma \hookrightarrow U\bigg( \prod_{k=1}^\infty M_k(\C) \bigg)
  \quad\Leftrightarrow\quad
  C^*(\Gamma) \hookrightarrow \prod_{k=1}^\infty M_k(\C).
\end{displaymath}
The following theorem says that a discrete countable amenable group
$\Gamma$ embeds in $U(Q_{\vec{n}})$ for some sequence $\vec{n}$ if and
only if $C^*(\Gamma)$ embeds in $Q_{\vec{m}}$ for some $\vec{m}$.

\begin{theorem}
  \label{thm:MF=qd2}
  Let $\Gamma$ be a discrete countable amenable group. Then $\Gamma$
  is MF if and only if the C*-algebra $C^*(\Gamma)$ is quasidiagonal.
\end{theorem}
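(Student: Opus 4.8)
The plan is to route both implications through the \emph{C*-algebraic} notion of MF, using two facts made available by amenability: first, since $\Gamma$ is amenable, $C^*(\Gamma)=C^*_r(\Gamma)$ is nuclear and carries the canonical faithful trace $\tau$ with $\tau(\lambda_s)=\delta_{s,e}$; second, for a separable \emph{nuclear} C*-algebra $A$ one has $A$ MF $\iff$ $A$ quasidiagonal. The nontrivial half of this last equivalence, MF $\Rightarrow$ QD, I would prove directly: an isometric $*$-homomorphism $A\hookrightarrow Q_{\vec n}$ lifts, by the Choi--Effros lifting theorem (nuclearity), to a contractive completely positive map $A\to\prod_k M_{n_k}(\C)$ whose components $\phi_k\colon A\to M_{n_k}(\C)$ satisfy $\|\phi_k(ab)-\phi_k(a)\phi_k(b)\|\to0$ (because we lift an honest homomorphism) and $\|\phi_k(a)\|\to\|a\|$ (because the embedding is isometric); this is exactly Voiculescu's criterion (Theorem~\ref{thm:voiculescu-charac-qd}). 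Granting this, the theorem reduces to the statement that, for amenable $\Gamma$, the group $\Gamma$ is MF if and only if the algebra $C^*(\Gamma)$ is MF.

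One of these implications is formal. If $C^*(\Gamma)$ is MF, fix an isometric $*$-homomorphism $\iota\colon C^*(\Gamma)\hookrightarrow Q_{\vec m}$ and let $p=\iota(1)$. Then $s\mapsto \iota(\lambda_s)+(1-p)$ is a group homomorphism of $\Gamma$ into $U(Q_{\vec m})$; it is injective because $\iota$ is injective and $s\mapsto\lambda_s$ is injective, so $\Gamma$ is MF. (In the same spirit, QD of $C^*(\Gamma)$ yields an isometric embedding into some $Q_{\vec n}$ by assembling Voiculescu's maps, recovering QD $\Rightarrow$ MF directly.)

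The substance is the converse: $\Gamma$ MF $\Rightarrow$ $C^*(\Gamma)$ MF. Starting from an injective homomorphism $\rho\colon\Gamma\hookrightarrow U(Q_{\vec n})$, the universal property of the full group C*-algebra extends $\rho$ to a $*$-homomorphism $\tilde\rho\colon C^*(\Gamma)\to Q_{\vec n}$. The difficulty is that injectivity of $\rho$ on group elements does \emph{not} force $\tilde\rho$ to be faithful---already for $\Gamma=\Z$ one can choose an infinite-order unitary in $U(Q_{\vec n})$ whose spectrum is a proper subarc of the circle, so that $\tilde\rho$ on $C(\mathbb{T})$ has nontrivial kernel. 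Thus the heart of the proof is to upgrade the injective group embedding to a \emph{faithful} (equivalently, isometric) $*$-homomorphism into some $Q_{\vec m}$. The mechanism I would use is the trace: a $*$-homomorphism into $Q_{\vec m}$ that intertwines $\tau$ with the limit trace $\tau_\omega((a_k))=\lim_{k\to\omega}\tfrac{1}{n_k}\Tr(a_k)$ (for a free ultrafilter $\omega$) is automatically isometric, since for $b=a^*a\ge0$ the faithfulness of $\tau$ gives $\|b\|=\lim_p\tau(b^p)^{1/p}=\lim_p\tau_\omega(\tilde\rho(b)^p)^{1/p}\le\|\tilde\rho(b)\|\le\|b\|$. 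So it would suffice to produce matrix \emph{approximate} representations $U^{(k)}_s$ that are operator-norm asymptotically multiplicative---$\|U^{(k)}_sU^{(k)}_t-U^{(k)}_{st}\|\to0$, which the MF embedding $\rho$ supplies---and whose normalized traces satisfy $\tfrac{1}{n_k}\Tr(U^{(k)}_s)\to\delta_{s,e}$.

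I expect the trace condition to be the main obstacle, precisely because it pulls against operator-norm multiplicativity. Amenability furnishes, via a F\o lner sequence $(F_l)$, the compressions $a\mapsto P_{F_l}\lambda(a)P_{F_l}$, which have \emph{exactly} the right trace ($\tfrac{1}{|F_l|}\Tr(P_{F_l}\lambda_sP_{F_l})=\delta_{s,e}$) and recover the full (hence reduced) norm, but which fail to be asymptotically multiplicative in operator norm---the defect $\|P_{F_l}\lambda_s(1-P_{F_l})\lambda_tP_{F_l}\|$ is a partial isometry of norm essentially one, and tensoring with the multiplicative MF factor does not repair it. Conversely, $\rho$ is operator-norm multiplicative but may carry the wrong trace, and no operator-norm-small perturbation can correct a trace bounded away from $\delta_{s,e}$. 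Reconciling these two approximations---extracting from the MF data of $\Gamma$ a single sequence of matrix maps that is simultaneously operator-norm asymptotically multiplicative and asymptotically $\tau$-preserving---is the crux, and is where the MF hypothesis on $\Gamma$ must be combined essentially with amenability. Once such maps are in hand, I would assemble them into a faithful $*$-homomorphism $C^*(\Gamma)\hookrightarrow Q_{\vec m}$ (via Choi--Effros liftings of finitely many auxiliary embeddings, a direct sum to approximate the norm on finite subsets, and a diagonal reindexing into a single sequence) and conclude quasidiagonality from Theorem~\ref{thm:voiculescu-charac-qd}.
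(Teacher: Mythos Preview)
You have correctly isolated the real difficulty: the extension $\tilde\rho\colon C^*(\Gamma)\to Q_{\vec n}$ of an injective group embedding $\rho\colon\Gamma\hookrightarrow U(Q_{\vec n})$ need not be faithful, as your $\Gamma=\Z$ example shows. But your proposal does not close this gap. Your plan is to correct the trace of the lifted unitaries so that the resulting $*$-homomorphism becomes $\tau$-preserving and hence isometric; you then explicitly note that the F\o lner compressions give the right trace but the wrong multiplicativity, that $\rho$ gives the right multiplicativity but the wrong trace, and that ``reconciling these two approximations \dots\ is the crux.'' That crux is never resolved---no construction is offered that achieves both simultaneously, and there is no evident mechanism by which the MF data alone repairs the trace. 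As written this is a statement of the problem, not a proof.

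The paper's argument sidesteps the trace entirely. Instead of trying to make $\tilde\rho$ faithful, it observes that the sub-C*-algebra $B=C^*(\theta(\Gamma))\subset Q_{\vec n}$ is nuclear (being a quotient of $C^*(\Gamma)$) and hence quasidiagonal, and then invokes Proposition~\ref{prop:residually-AF} (from \cite{Dadarlat-Deaconu11}): for an amenable group, any family of homomorphisms $\Gamma\to U(B_k)$ that merely \emph{separates points of $\Gamma$} yields a faithful unital embedding of $C^*(\Gamma)$ into $\prod_n C_n$ with $C_n=M_{2^n}(\C)\otimes B^{\otimes 2n}$. Each $C_n$ is quasidiagonal because $B$ is, and a separable subalgebra of a product of quasidiagonal algebras is quasidiagonal. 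The point is that this proposition manufactures faithfulness on $C^*(\Gamma)$ out of point-separation on $\Gamma$, with amenability doing the work internally (via a Fell-absorption-type averaging); no control of traces on the original embedding is needed.
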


For the proof we will need the following result from
\cite{Dadarlat-Deaconu11}.

\begin{proposition}[Prop.~2.1 of \cite{Dadarlat-Deaconu11}]
  \label{prop:residually-AF}
  Let $\Gamma$ be a discrete amenable group.  Suppose there exist a
  sequence $(B_k)_{k=1}^\infty$ of unital C*-algebras and a sequence
  $(\omega_k)_{k=1}^\infty$ of group homomorphisms $\omega_k\colon
  \Gamma\to U(B_k)$ that separate the points of $\Gamma$, and that
  each $\omega_k$ appears infinitely many times in the sequence. Then
  $C^*(\Gamma)$ embeds unitally into the product C*-algebra $
  \prod_{n=1}^\infty C_n $, where $C_n = \bigotimes_{k=1}^n
  M_2(\C)\otimes B_k\otimes B_k$ (minimal tensor product).
\end{proposition}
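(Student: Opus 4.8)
The plan is to build each coordinate map from a unitary representation and to let amenability convert a separation statement about the group into a norm statement about the C*-algebra. For every $n$ I would produce a unital group homomorphism $\Phi_n\colon\Gamma\to U(C_n)$, extend it by universality of the full group C*-algebra to a unital $*$-homomorphism $\bar\Phi_n\colon C^*(\Gamma)\to C_n$, and set $\Phi=(\bar\Phi_n)_n\colon C^*(\Gamma)\to\prod_n C_n$. Unitality of $\Phi$ and the bound $\|\bar\Phi_n(a)\|\le\|a\|$ are then immediate, so the entire content is the reverse inequality $\sup_n\|\bar\Phi_n(a)\|\ge\|a\|$ for $a\in C^*(\Gamma)$. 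Since $\Gamma$ is amenable, $\|a\|=\|\lambda(a)\|$ (the full and reduced norms agree), so it is enough to establish the weak containment $\lambda\prec\bigoplus_n\Phi_n$.

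I would verify this weak containment at the level of states. The left regular representation is the GNS representation of the canonical trace $\tau$ on $C^*(\Gamma)$, which is determined by $\tau(\lambda_s)=\delta_{s,e}$. Hence it suffices to find states $\psi^{(n)}$ on $C_n$ whose pullbacks satisfy $\psi^{(n)}(\Phi_n(s))\to\delta_{s,e}$ pointwise in $s\in\Gamma$: the states $\psi^{(n)}\circ\bar\Phi_n$ then converge weak-$*$ to $\tau$, each factors through $\bar\Phi_n$ and so has GNS representation weakly contained in $\bigoplus_n\Phi_n$, and since this property is preserved under weak-$*$ limits we obtain $\lambda=\pi_\tau\prec\bigoplus_n\Phi_n$, hence $\sup_n\|\bar\Phi_n(a)\|\ge\|\lambda(a)\|=\|a\|$.

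The building block $M_2(\C)\otimes B_k\otimes B_k$ is where I would realize a single factor. On it I would form a unital representation $\sigma_k\colon\Gamma\to U(M_2(\C)\otimes B_k\otimes B_k)$ that uses the $M_2(\C)$ to average $\omega_k$ against the trivial representation — schematically $s\mapsto\mathrm{diag}(1,\omega_k(s))$ — together with the conjugate representation $\bar\omega_k$ on the second copy of $B_k$. Taking $\Phi_n=\bigotimes_{k=1}^n\sigma_k$ and product states $\psi^{(n)}=\bigotimes_{k=1}^n\psi_k$ gives $\psi^{(n)}(\Phi_n(s))=\prod_{k=1}^n\varphi_k(s)$, where each $\varphi_k(s)=\psi_k(\sigma_k(s))$ is a matrix coefficient of the form $\tfrac12\bigl(1+\mu_k(\omega_k(s))\bigr)$ for a state $\mu_k$ on $B_k$. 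Such a $\varphi_k$ is positive-definite, equals $1$ at $e$, and, crucially, has $|\varphi_k(s)|<1$ exactly when $\mu_k(\omega_k(s))\ne1$, which can be arranged for any prescribed $s$ with $\omega_k(s)\ne1$ by a suitable choice of $\mu_k$.

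It remains to see that $\prod_{k=1}^n\varphi_k(s)\to\delta_{s,e}$. At $s=e$ every factor is $1$. For $s\ne e$ the separation hypothesis furnishes an index with $\omega_k(s)\ne1$, and the hypothesis that each $\omega_k$ recurs infinitely often guarantees that this index returns infinitely many times; fixing a good state on those occurrences keeps $|\varphi_k(s)|\le r_s<1$ along a subsequence, forcing the modulus of the product to zero. I expect the genuine obstacle to be the simultaneous bookkeeping: producing one sequence of states $\psi^{(n)}$ that drives $\prod_{k\le n}\varphi_k(s)$ to $0$ for every $s\ne e$ at once, rather than one element at a time. This is precisely what the infinite-recurrence hypothesis is for, since it lets one devote successive appearances of a given $\omega_k$ to successive elements of a fixed enumeration of $\Gamma$ without altering the homomorphisms $\Phi_n$ themselves. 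A secondary subtlety, and the reason for averaging $\omega_k$ against the trivial representation rather than passing to $\omega_k\otimes\bar\omega_k$ alone, is that conjugation coefficients have modulus $1$ on group elements whose image under $\omega_k$ is a nontrivial scalar, and so would fail to detect them, whereas the average $\tfrac12\bigl(1+\mu_k(\omega_k(s))\bigr)$ does detect such elements.
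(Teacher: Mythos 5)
A point of order first: the paper under review does not prove this proposition at all---it is quoted, with proof, from Prop.~2.1 of \cite{Dadarlat-Deaconu11}---so your proposal can only be measured against the cited source and against correctness. On that score your strategy is the right one, and it is essentially the standard argument: amenability identifies the full and reduced norms; $\lambda$ is the GNS representation of the state $\delta_e$; each state $\psi^{(n)}\circ\bar\Phi_n$ annihilates $\ker\Phi$, and the set of states annihilating a fixed ideal is weak-$*$ closed, so pointwise convergence of $\prod_{k\le n}\varphi_k(s)$ to $\delta_{s,e}$ forces $\delta_e$ to annihilate $\ker\Phi$, whence $\|\Phi(a)\|\geq\|\lambda(a)\|=\|a\|$. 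The detection fact you invoke without proof is also correct: if $u\in U(B)$ satisfies $\mu(u)=1$ for every state $\mu$, then $\mu\bigl((1-u)^*(1-u)\bigr)=2-2\,\mathrm{Re}\,\mu(u)=0$ for every $\mu$, so $u=1$; thus $\omega_k(s)\neq 1$ yields a state $\mu_k$ with $\bigl|\tfrac12\bigl(1+\mu_k(\omega_k(s))\bigr)\bigr|<1$. The recurrence bookkeeping you sketch (partition the infinitely many occurrences of each homomorphism among the countably many group elements it must detect, reusing one fixed detecting state on each block) works verbatim and gives a single sequence of product states, as required.

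The one genuine misstep is the phrase ``the conjugate representation $\bar\omega_k$ on the second copy of $B_k$.'' For an abstract unital C*-algebra $B_k$ this object does not exist: $s\mapsto\omega_k(s)^*$ is a group homomorphism into the unitary group of the opposite algebra $B_k^{\mathrm{op}}$, not of $B_k$, and $B_k^{\mathrm{op}}$ need not be isomorphic to $B_k$. (Conjugates make sense for concrete Hilbert-space representations or for matrix algebras, but not canonically here.) Fortunately your argument never uses that tensor factor: the coefficient you actually estimate, $\varphi_k(s)=\tfrac12\bigl(1+\mu_k(\omega_k(s))\bigr)$, is a coefficient of $s\mapsto\operatorname{diag}\bigl(1,\omega_k(s)\bigr)\in M_2(\C)\otimes B_k$ alone. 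So define $\sigma_k(s)=\operatorname{diag}\bigl(1,\omega_k(s)\bigr)\otimes 1_{B_k}$; the second copy of $B_k$ is then present only so that $\Phi_n$ takes values in the prescribed $C_n$, and the rest of your proof goes through unchanged. Your closing observation remains apt in spirit: coefficients of $\omega_k\otimes\bar\omega_k$ type alone (were they available) have modulus $1$ on elements with scalar image and so cannot detect them, which is exactly why the $M_2$ average against the trivial representation is the right device.
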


\begin{proof}[Proof of Theorem~\ref{thm:MF=qd2}]
  By the Choi-Effros lifting theorem \cite{Choi-Effros76} and the
  local characterization of quasidiagonality given by Voiculescu,
  Theorem~\ref{thm:voiculescu-charac-qd}, it follows that a separable
  and nuclear C*-algebra $A$ is quasidiagonal if and only if $A$ is an
  MF algebra.  Suppose that $\Gamma$ is MF.  Then there is an
  injective homomorphism $\theta:\Gamma\to U(Q_{\vec{n}})$ for some
  $\vec{n}$. Let $B$ be the sub-C*-algebra of $Q_{\vec{n}}$ generated
  by $\theta(\Gamma)$.  Since $\Gamma$ is amenable, $B$ is nuclear and
  hence quasidiagonal.  By Proposition~\ref{prop:residually-AF}
  $C^*(\Gamma)$ embeds the product $\prod_n C_n$, where $C_n =
  M_{2^n}(\C)\otimes B^{\otimes 2n}$. Since $B$ is quasidiagonal, so
  is each $C_n$. It follows that $C^*(\Gamma)$ is quasidiagonal.

  Conversely, if $C^*(\Gamma)$ is quasidiagonal, then
  $C^*(\Gamma)\subset Q_{\vec{n}}$ for some $\vec{n}$ and hence
  $\Gamma$ is MF.
\end{proof}

\begin{definition}
  A group $\Gamma$ is \emph{locally embeddable into the class of finite
    groups} (or simply LEF) if for any finite subset ${F}\subset
  \Gamma$ there is a finite group $H$ and a map $\phi\colon \Gamma\to H$
  that is both injective and multiplicative when restricted to $F$.
\end{definition}

\begin{remark}
  \label{rem:rfd-vs-lef}
  Vershik and Gordon introduced LEF groups in \cite{Vershik-Gordon97}.
  Theorems~1 and~2 of \cite{Vershik-Gordon97} illustrate the
  relationship between LEF groups and residually finite groups.
  Specifically, if every finitely generated subgroup of a group
  $\Gamma$ is residually finite, then $\Gamma$ is LEF.  On the other
  hand, every finitely presented LEF group is residually finite.
  There are finitely presented solvable non-LEF groups, see
  \cite{Abels79}.
\end{remark}

We will show that an LEF group is MF.  The following lemma will
be used.

\begin{lemma}
  \label{lem:lef-iff-embeds}
  Let $\Gamma$ be a discrete countable group. Then $\Gamma$ is LEF if
  and only if there is a sequence of finite groups $( H_k
  )_{k=1}^\infty$ and an injective homomorphism $\Phi\colon \Gamma\to
  \prod_k H_k / \sum_k H_k$.
\end{lemma}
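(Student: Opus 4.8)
The plan is to prove the two implications separately, writing $\pi\colon \prod_k H_k \to \prod_k H_k/\sum_k H_k$ for the quotient homomorphism and recalling that $\sum_k H_k$ is the normal subgroup of those sequences that are trivial in all but finitely many coordinates; consequently two elements of $\prod_k H_k$ have the same image under $\pi$ if and only if they agree in all but finitely many coordinates. Throughout I will use that $\Gamma$ is countable to fix an increasing sequence of finite subsets $F_1 \subset F_2 \subset \cdots$ with $e \in F_1$ and $\bigcup_k F_k = \Gamma$.

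For the direction ``LEF $\Rightarrow$ embeds,'' I would apply the LEF hypothesis to each $F_k$ to obtain a finite group $H_k$ and a map $\varphi_k\colon \Gamma \to H_k$ that is injective and multiplicative on $F_k$, and then set $\Phi(g) := \pi\big((\varphi_k(g))_k\big)$. That $\Phi$ is a homomorphism is routine: for fixed $g,h$, as soon as $k$ is large enough that $g,h,gh \in F_k$, multiplicativity gives $\varphi_k(gh) = \varphi_k(g)\varphi_k(h)$, so the two representing sequences agree for all large $k$ and hence define the same class. For injectivity it suffices to check $\ker\Phi$ is trivial. Here I would first note that $e\cdot e = e$ with $e \in F_k$ forces $\varphi_k(e) = e_{H_k}$ (an idempotent in a group is the identity); then for $g \neq e$ and all large $k$ one has $e,g \in F_k$, so injectivity of $\varphi_k$ on $F_k$ yields $\varphi_k(g) \neq \varphi_k(e) = e_{H_k}$, whence $(\varphi_k(g))_k \notin \sum_k H_k$ and $\Phi(g) \neq e$.

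For the converse, ``embeds $\Rightarrow$ LEF,'' I would choose for each $g \in \Gamma$ a lift $(\phi_k(g))_k \in \prod_k H_k$ of $\Phi(g)$, producing functions $\phi_k\colon \Gamma \to H_k$ (these are \emph{not} homomorphisms). The homomorphism property of $\Phi$ translates into: for each pair $g,h$ the set $B_{g,h} = \{k : \phi_k(g)\phi_k(h) \neq \phi_k(gh)\}$ is finite, while injectivity of $\Phi$ translates into: for $g \neq h$ the set $A_{g,h} = \{k : \phi_k(g) \neq \phi_k(h)\}$ is infinite. Given a finite $F \subset \Gamma$, the map I want cannot in general be a single coordinate $\phi_k$: this is the main obstacle, because $A_{g,h}$ is only guaranteed infinite (not cofinite), so the finitely many sets $A_{g,h}$ may have empty common intersection and no one coordinate need be injective on $F$ where it is also multiplicative. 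The remedy is to bundle finitely many coordinates: set $B = \bigcup B_{g,h}$ (a finite union over $g,h \in F$ with $gh \in F$, hence finite), choose for each pair $g \neq h$ in $F$ some $k_{g,h} \in A_{g,h}\setminus B$ (nonempty since $A_{g,h}$ is infinite and $B$ finite), and put $K = \{k_{g,h}\}$. Then $H := \prod_{k \in K} H_k$ is a finite group and $\varphi := (\phi_k)_{k \in K}\colon \Gamma \to H$ is multiplicative on $F$ (as $K \cap B = \emptyset$) and injective on $F$ (as each pair $g \neq h$ is separated by its coordinate $k_{g,h} \in K$), exhibiting $\Gamma$ as LEF. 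I expect this coordinate-selection step to be the only genuinely delicate point, the difficulty being precisely the passage from ``infinitely often'' to a single finite witnessing set.
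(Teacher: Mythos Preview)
Your argument is correct. The paper itself omits the proof entirely, stating only that it ``is straightforward,'' so there is nothing to compare against beyond noting that your write-up supplies exactly the routine verification the authors had in mind. One small remark: in the converse direction you take $B$ to be the union of $B_{g,h}$ over pairs $g,h\in F$ with $gh\in F$; depending on how one reads ``multiplicative when restricted to $F$'' in the paper's definition, you may instead want the union over \emph{all} $g,h\in F$ (since $\varphi$ is defined on all of $\Gamma$ and the identity $\varphi(gh)=\varphi(g)\varphi(h)$ makes sense even when $gh\notin F$). Either way the union is finite and the rest of your argument is unchanged. Your observation that a single coordinate $\phi_k$ need not suffice---because the separating sets $A_{g,h}$ are only infinite, not cofinite---and that one must therefore bundle finitely many coordinates into a product group, is exactly the point that makes this lemma slightly more than a tautology.
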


\begin{proof}
  Suppose first that $\Gamma$ embeds in a group of the form $\prod_k
  H_k / \sum_k H_k$ where $H_k$ are finite groups.  Fix a finite
  subset $F$ of $\Gamma$. Let $\phi:\Gamma \to \prod_k H_k$ be a set
  theoretic lifting of $\Phi$. It is necessarily an injective
  map. Write $\phi(s)=(\phi_k(s))$ for maps $\phi_k:\Gamma \to
  H_k$. Since any sequence in $\sum_k H_k$ has at most finitely many
  nontrivial terms it follows that there is $k_0$ such that for all
  $k\geq k_0$ the map $\phi_k:\Gamma\to H_k$ is multiplicative and
  injective on $F$.

  Conversely, suppose that $\Gamma$ is LEF.  Write $\Gamma=\bigcup_k F_k$
  where $(F_k)_k$ is an increasing sequence of finite subsets of
  $\Gamma$. Since $\Gamma$ is LEF, there is a sequence of finite
  groups $(H_k)_k$ and maps $\phi_k:\Gamma\to H_k$ such that each
  $\phi_k$ is multiplicative for on $F_k$ and the restriction of
  $\phi_k$ to $F_k$ is injective. It is then immediate that the
  sequence $(\phi_k)$ induces an embedding of groups $\Phi:\Gamma\to
  \prod_k H_k / \sum_k H_k$.
\end{proof}

% The proof is straightforward.

\begin{proposition}
  \label{prop:lef-implies-mf}
  Let $\Gamma$ be a countable discrete group. If $\Gamma$ is LEF, then
  $\Gamma$ is MF.
\end{proposition}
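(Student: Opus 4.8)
The plan is to use Lemma~\ref{lem:lef-iff-embeds} to reduce to an embedding into a quotient of a product of finite groups, and then to represent each finite group faithfully by unitary matrices, pushing the whole picture into $U(Q_{\vec{n}})$. First I would invoke Lemma~\ref{lem:lef-iff-embeds} to obtain a sequence of finite groups $(H_k)_{k=1}^\infty$ and an injective homomorphism $\Phi\colon \Gamma \to \prod_k H_k / \sum_k H_k$. For each $k$, let $\rho_k\colon H_k \to U(M_{d_k}(\C))$ be the left regular representation, where $d_k = |H_k|$; this is a faithful unitary representation. Assembling these, set $R((h_k)_k) = (\rho_k(h_k))_k$, a homomorphism $\prod_k H_k \to U(\prod_k M_{d_k}(\C))$.

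Next I would descend $R$ to the quotient. Composing with the canonical surjection $q\colon \prod_k M_{d_k}(\C)\to Q_{\vec{d}}$ gives a homomorphism $\bar R = q\circ R$. The key observation is that if $(h_k)_k \in \sum_k H_k$, i.e.\ $h_k = e$ for all but finitely many $k$, then $\rho_k(h_k) = 1$ for all but finitely many $k$, so $R((h_k)_k) - 1$ lies in the ideal $\sum_k M_{d_k}(\C)$ and hence $\bar R((h_k)_k) = 1$. Thus $\sum_k H_k \subseteq \ker \bar R$, and $\bar R$ factors through a homomorphism $\tilde R\colon \prod_k H_k/\sum_k H_k \to U(Q_{\vec{d}})$.

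The main point, and the step I expect to require the most care, is injectivity of $\tilde R$: it is not enough that $\rho_k(h_k)\neq 1$ on a nontrivial coset, since in the quotient one must rule out the corresponding norms tending to zero. Here the regular representation is exactly what is wanted, because it provides a \emph{uniform} lower bound. For $h\neq e$ in a finite group, left translation by $h$ is fixed-point-free, so for any basis vector $\delta_x$ one has $\rho_k(h)\delta_x = \delta_{hx}$ with $hx\neq x$, giving $\|\rho_k(h) - 1\| \geq \|\delta_{hx} - \delta_x\| = \sqrt{2}$. Consequently, if $(h_k)_k$ represents a nontrivial element of $\prod_k H_k/\sum_k H_k$, then $h_k\neq e$ for infinitely many $k$, so $\|\rho_k(h_k) - 1\|\geq \sqrt{2}$ infinitely often and $R((h_k)_k) - 1\notin \sum_k M_{d_k}(\C)$; hence $\tilde R((h_k)_k)\neq 1$. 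Therefore $\tilde R$ is injective, and $\tilde R\circ \Phi\colon \Gamma \to U(Q_{\vec{d}})$ exhibits $\Gamma$ as MF. If the dimensions $d_k$ fail to be increasing I would simply relabel or pad them, or appeal to the observation following Definition~\ref{def:mf} that the MF condition may be tested against the single sequence $(1,2,3,\dots)$.
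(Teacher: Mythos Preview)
Your proof is correct and follows essentially the same route as the paper: compose the LEF data with the left regular representations of the finite groups $H_k$ and use the uniform lower bound $\|\rho_k(h)-1\|\geq\sqrt{2}$ for $h\neq e$ to guarantee injectivity into $U(Q_{\vec{d}})$. The only cosmetic difference is that you factor explicitly through the group quotient $\prod_k H_k/\sum_k H_k$ via Lemma~\ref{lem:lef-iff-embeds}, whereas the paper works directly with the local maps $\varphi_k\colon\Gamma\to H_k$ from the proof of that lemma; the substance is identical.
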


\begin{proof}
  Let $H_k$ and $\phi_k$ be as in the proof of
  Lemma~\ref{lem:lef-iff-embeds}.  Let $\lambda_k\colon H_k\to
  B(\ell^2(H_k))$ be the left regular representation of $H_k$.  If
  $s,t\in F_k$, $s\neq t$, then $\phi_k(s)\neq \phi_k(t)$ and
  hence $\|\lambda_k(\phi_k(s))-\lambda_k(\phi_k(t))\|\geq
  \sqrt{2}$.  Let $n_k=|H_k|$ and set $\vec{n}=(n_k)$ as above.
  Consider the maps $\pi_k=\lambda_k\circ \phi_k:\Gamma\to U(n_k)$.
  Then the sequence of maps $(\pi_k)$ induces a group homomorphism
  $\pi\colon \Gamma \to U(Q_{\vec{n}})$ which is injective since
  $\|\pi(s)-\pi(t)\|\geq \sqrt{2}$ whenever $s,t$ are distinct
  elements of $\Gamma$.
\end{proof}

\subsection{An MF group that is not LEF}
\label{subsec:mf-but-not-lef}

Let $p$ be a prime number.  We recall the following group from
\cite{Abels79}:
\begin{equation*}
  \Gamma =
  \left\{%
    \begin{pmatrix}
      1 & x_{12} & x_{13} & x_{14}\\
      0 & p^k & x_{23} & x_{24}\\
      0 & 0 & p^n & x_{34}\\
      0 & 0 & 0 & 1
    \end{pmatrix}
    : x_{ij}\in \ZH,\ k,n\in \Z
  \right\}.
\end{equation*}
One sees that
\begin{equation*}
  Z(\Gamma) =
  \left\{%
    \begin{pmatrix}
      1 & 0 & 0 & x\\
      0 & 1 & 0 & 0\\
      0 & 0 & 1 & 0\\
      0 & 0 & 0 & 1
    \end{pmatrix}
    :  x\in \ZH
  \right\}.
\end{equation*}
We define
\begin{equation*}
  N =
  \left\{
    \begin{pmatrix}
      1 & 0 & 0 & k\\
      0 & 1 & 0 & 0\\
      0 & 0 & 1 & 0\\
      0 & 0 & 0 & 1
    \end{pmatrix}
    : k\in \Z
  \right\}.
\end{equation*}
Abels showed in \cite{Abels79} that $\Gamma$ and $\Gamma/N$ are
finitely presented groups. Moreover, as observed by Abels, ideas
similar to those in \cite[Page 349]{Hall61}, show that $\Gamma/N$ does
not have the Hopf property.  It is well known that any finitely
generated residually finite group has the Hopf property (see
\cite{Malcev40, Magnus69}); hence $\Gamma/N$ is not residually
finite. In particular $\Gamma/N$ is not LEF (see
Remark~\ref{rem:rfd-vs-lef}).  As observed in \cite{Abels79}, the
group $\Gamma/N$ is a solvable---hence amenable---group.  Finally, we
note that $\Gamma$ is residually finite.  This follows from Mal'cev's
theorem (stating that a finitely generated subgroup of $\GL(n, F)$ is
residually finite for any field $F$) \cite{Malcev40, Magnus69}.

The proof of the following lemma basically consists of writing down
the definitions of induced representations, which we do for the
convenience of the reader.

\begin{lemma}
  \label{lem:induced}
  Let $F$ be a finite group and $H\leq Z(F)$. Let $\gamma: H \to L(E)$
  be a finite dimensional unitary representation of $H$, and
  $\Ind_H^F\gamma$ be the induced unitary representation of $F$.  Then
  \begin{enumerate}
  \item[(i)] $\V \Ind_H^F\gamma(g)- 1\V\geq\sqrt{2}$ for all $g\in
    F\setminus H$ and
  \item[(ii)] $\Ind_H^F\gamma |_H$ is unitary equivalent to $[F:H]\, \gamma$.
 \end{enumerate}
\end{lemma}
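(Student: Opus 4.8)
The plan is to work with an explicit model of the induced representation and to exploit the orthogonal decomposition of its representation space over cosets. Concretely, I would realize $\Ind_H^F\gamma$ on the space $V=\{f\colon F\to E : f(hg)=\gamma(h)f(g)\text{ for all }h\in H,\ g\in F\}$, with $F$ acting by right translation, $(\pi(s)f)(g)=f(gs)$, and inner product $\la f_1,f_2\ra=\sum_i\la f_1(g_i),f_2(g_i)\ra_E$ summed over a fixed set of representatives $g_1=e,\dots,g_d$ for the right cosets $H\backslash F$, where $d=[F:H]$. Because $\gamma$ is unitary, this inner product is independent of the choice of representatives and each $\pi(s)$ is unitary. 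The structural observation that drives everything is that $V$ splits orthogonally as $V=\bigoplus_{c\in H\backslash F}V_c$, where $V_c$ consists of the functions in $V$ supported on the coset $c$, and that a one-line computation gives $\pi(s)\colon V_{Hx}\to V_{Hxs^{-1}}$; that is, $F$ merely permutes the summands according to the right coset action.

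For part (i) I would test against a single summand. Fix $g\in F\setminus H$ and a nonzero vector $v\in E$, and let $f\in V_H$ be the function supported on $H$ with $f(h)=\gamma(h)v$; one checks directly that $f\in V$ and $f\neq 0$. Since $\pi(g)$ carries $V_H$ into $V_{Hg^{-1}}$ and $Hg^{-1}\neq H$ precisely because $g\notin H$, the vectors $\pi(g)f$ and $f$ lie in orthogonal summands, so $\la\pi(g)f,f\ra=0$. As $\pi(g)$ is unitary, normalizing $f$ gives $\V\pi(g)f-f\V^2=2-2\,\mathrm{Re}\,\la\pi(g)f,f\ra=2$, whence $\V\Ind_H^F\gamma(g)-1\V\geq\V\pi(g)f-f\V=\sqrt{2}$. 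I note that centrality of $H$ plays no role in this part.

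For part (ii), centrality is exactly what is needed. For $h\in H$ and $f\in V$, the computation $(\pi(h)f)(g)=f(gh)=f(hg)=\gamma(h)f(g)$ uses $h\in Z(F)$ in the middle step and shows that $\pi(h)$ preserves each summand $V_{Hx}$ and acts there as $\gamma(h)$ under the isometric identification $V_{Hx}\cong E$, $f\mapsto f(x)$. Summing over the $d=[F:H]$ cosets then yields $\Ind_H^F\gamma|_H\cong d\cdot\gamma=[F:H]\,\gamma$.

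The calculations are all routine; the only real care is in fixing conventions for the induced representation—left versus right cosets and the side on which $F$ translates—so that the coset-permutation formula $\pi(s)\colon V_{Hx}\to V_{Hxs^{-1}}$ and the covariance identity $f(hg)=\gamma(h)f(g)$ come out consistently. Once the model is pinned down, part (i) reduces to the orthogonality of two coset-summands and part (ii) to reading off the $H$-action from centrality, so I expect no genuine obstacle beyond this bookkeeping.
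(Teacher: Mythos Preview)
Your proposal is correct and follows essentially the same approach as the paper: both realize the induced representation on a space of $E$-valued functions on $F$ with a coset-covariance condition, prove (i) by testing against a vector supported on the single coset $H$ and using that $g\notin H$ sends it to an orthogonal coset-summand, and prove (ii) by the one-line centrality computation showing $\pi(h)$ acts as $\gamma(h)$ on each summand. The only difference is a harmless choice of convention (you use right cosets $H\backslash F$ with left covariance and right translation, while the paper uses left cosets $F/H$ with the dual convention), which you yourself flag as the only point requiring care.
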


\begin{proof}
  Recall that (see e.g. \cite[Appendix E]{Bekka-Harpe-etal08})
  $\textrm{Ind}_H^F\gamma$ is defined on the Hilbert space
  \begin{equation*}
    \mathcal{A} =
    \{ \xi\colon F\to E: \xi(xh) = \gamma(h^{-1})\xi(x)
    \quad \textrm{ for all }x\in F, h\in H \},
  \end{equation*}
  with inner product defined by
  \begin{equation*}
    \la \xi,\eta \ra =
    \sum_{xH\in F/H} \la\xi(x),\eta(x)\ra.
  \end{equation*}
  (Note that if $xH=yH$, then $\la\xi(x),\eta(x)\ra
  =\la\xi(y),\eta(y)\ra$ so the above inner product is well defined).
  One then defines the induced representation on the finite
  dimensional Hilbert space $\mathcal{A}$ by the equations
  \begin{equation*}
    \Ind_H^F\chi(g)\xi(x) = \xi(g^{-1}x)
    \quad \textrm{ for all }g,x\in F.
  \end{equation*}
  Suppose now that $g\not\in H$. Define $\eta\in \mathcal{A}$ by
  \begin{equation*}
    \eta(x) =
    \begin{cases}
      \gamma(x^{-1})\xi_0 & \text{ if }x\in H\\
      0            & \text{ if }x\not\in H,
    \end{cases}
  \end{equation*}
  where $\xi_0\in E$ is a unit vector.  Then $\V\eta \V=1$ and
  \begin{equation*}
    \la \Ind_H^F \gamma(g) \eta,\eta \ra
    = \la \eta(g^{-1}), \eta(e) \ra
    = 0.
  \end{equation*}
  This proves (i).

  Let $h\in H$. Then
  \begin{equation*}
   ( \Ind_H^F \gamma(h) \xi)(x)
    = \xi(h^{-1}x)
    = \xi(xh^{-1})
    = \gamma(h) \xi(x).
  \end{equation*}
  This proves (ii).
\end{proof}

\begin{remark}
  \label{rem:wyswyg}
  For a group $G$ we denote by $\lambda_G$ its left regular
  representation.  Let $\Gamma$ be a discrete countable residually
  finite group. It follows that there is a decreasing sequence of
  finite index normal subgroups $(L_n)_{n\geq 1}$ of $\Gamma$ such
  that $\bigcap_{n=1}^\infty L_n=\{e\}$.  We denote by $\pi_n$ the
  corresponding surjective homomorphisms $\pi_n:\Gamma\to
  \Gamma_n:=\Gamma/L_n$.  It is known (and not hard to verify) that
  $\lambda_\Gamma$ is weakly contained in the direct sum of the
  representations $ \lambda_{\Gamma_n}\circ \pi_n$.  If in addition
  $\Gamma$ is amenable, then it follows that the set of irreducible
  subrepresentations of all of $\lambda_{\Gamma_n}\circ \pi_n$ is
  dense in the primitive spectrum of $\Gamma$.
\end{remark}

\begin{theorem}
  \label{thm:on-center-MF}
  Let $\Gamma$ be a countable discrete residually finite group and let
  $N$ be a central subgroup of $\Gamma$. Then $\Gamma/N$ is MF and
  hence if in addition $\Gamma$ is amenable, then $C^*(\Gamma/N)$ is
  quasidiagonal.
\end{theorem}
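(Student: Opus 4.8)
The plan is to build, using the residual finiteness of $\Gamma$ and the centrality of $N$, a sequence of finite dimensional unitary representations $\Psi_k\colon\Gamma\to U(n_k)$ with the two properties that $\Psi_k(h)\to 1$ for every $h\in N$ and $\limsup_k\|\Psi_k(g)-1\|>0$ for every $g\in\Gamma\setminus N$. Such a sequence induces a homomorphism $\Gamma\to U(Q_{\vec n})$ with kernel exactly $N$, hence an injective homomorphism $\Gamma/N\hookrightarrow U(Q_{\vec n})$, which is precisely the statement that $\Gamma/N$ is MF (Definition~\ref{def:mf}). The second assertion is then immediate: a quotient of an amenable group is amenable, so if $\Gamma$ is amenable then $\Gamma/N$ is an amenable MF group and Theorem~\ref{thm:MF=qd2} gives that $C^*(\Gamma/N)$ is quasidiagonal.

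To produce the $\Psi_k$, fix a decreasing sequence $(L_n)$ of finite index normal subgroups of $\Gamma$ with $\bigcap_n L_n=\{e\}$ (Remark~\ref{rem:wyswyg}), and write $\Gamma_n=\Gamma/L_n$, $\pi_n\colon\Gamma\to\Gamma_n$. Since $N$ is central, its image $\bar N_n=NL_n/L_n$ lies in $Z(\Gamma_n)$, so for any character $\chi\in\widehat{\bar N_n}$ the induced representation $\Ind_{\bar N_n}^{\Gamma_n}\chi$ is available, and Lemma~\ref{lem:induced} tells us that it acts on $N$ by the scalar $\chi(\pi_n(\cdot))$ and satisfies $\|\Ind_{\bar N_n}^{\Gamma_n}\chi(\pi_n(g))-1\|\geq\sqrt 2$ whenever $\pi_n(g)\notin\bar N_n$. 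For the elements $g$ outside the profinite closure $\overline N=\bigcap_n NL_n$ this already settles matters: choosing $n$ with $g\notin NL_n$ and taking $\chi$ trivial gives a representation that kills $N$ exactly (by part (ii) of the lemma) while moving $\pi_n(g)$ a distance at least $\sqrt 2$ from $1$.

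The heart of the argument is the remaining case $g\in\overline N\setminus N$, which is nonempty precisely because $\Gamma/N$ need not be residually finite (Remark~\ref{rem:rfd-vs-lef}); for such $g$ every honest finite quotient sends $g$ into $\bar N_n$, so no representation that is exactly trivial on $N$ can detect $g$, and one must exploit the approximate flexibility built into the MF condition. Here I would pass to the profinite abelian group $P=\varprojlim_n N/(N\cap L_n)$ together with the dense homomorphism $\iota\colon N\to P$; each $g\in\overline N$ determines a coherent element $\hat g\in P$, and $\hat g\in\iota(N)$ if and only if $g\in N$. A Pontryagin duality computation then shows that, for $\hat g\notin\iota(N)$, the closed subgroup $\{\,t\in\T: (1,t)\in\overline{\{\,((\chi(\iota(h)))_{h\in N},\chi(\hat g)): \chi\in\widehat P\,\}}\,\}$ of $\T$ equals $\mu_d$, where $d\geq 1$ is least with $d\hat g\in\iota(N)$ (and equals all of $\T$ when no such $d$ exists); in particular it is nontrivial exactly when $\hat g\notin\iota(N)$. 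Since $\T^N\times\T$ is metrizable, this yields a sequence of continuous characters $\chi_l$ of $P$ with $\chi_l(\iota(h))\to 1$ for every $h\in N$ and $\chi_l(\hat g)\to\zeta$ for some $\zeta\neq 1$. Each $\chi_l$ factors through a finite $N/(N\cap L_{n_l})$, so $(\Ind_{\bar N_{n_l}}^{\Gamma_{n_l}}\chi_l)\circ\pi_{n_l}$ is a finite dimensional representation of $\Gamma$ that is as close to trivial as we like on any prescribed finite subset of $N$ while keeping $\|\cdot(g)-1\|$ bounded below by roughly $|\zeta-1|$.

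Finally I would assemble these pieces by a diagonal direct sum. Enumerating $\Gamma\setminus N=\{g_1,g_2,\dots\}$ and $N=\{h_1,h_2,\dots\}$, at stage $k$ I would, for each $i\leq k$, select a representation $\psi_i^{(k)}$ from the appropriate case above that is within $1/k$ of trivial on $h_1,\dots,h_k$ and separates $g_i$ from $1$ by a fixed positive constant $c_i$, and set $\Psi_k=\bigoplus_{i=1}^k\psi_i^{(k)}$. Then $\|\Psi_k(h_j)-1\|<1/k$ for all $k\geq j$, while $\|\Psi_k(g_i)-1\|\geq c_i$ for all $k\geq i$, which are exactly the two properties required. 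The only genuine obstacle in the whole argument is the case $g\in\overline N\setminus N$: the duality computation producing near-trivial characters that nevertheless detect $\hat g$ is where the centrality of $N$ is essential, since it is centrality that makes Lemma~\ref{lem:induced} applicable and forces the induced representations to act by scalars on $N$.
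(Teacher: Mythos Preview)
Your argument is correct, and the Pontryagin duality computation for the hard case $g\in\overline N\setminus N$ is exactly right: the annihilator of the image of $\hat P\to\hat N\times\T$ in $N\times\Z$ is $\{(h,m):\iota(h)=-m\hat g\}$, so $(1,t)$ lies in the closure iff $t^m=1$ whenever $m\hat g\in\iota(N)$, and this subgroup of $\T$ is nontrivial precisely when $\hat g\notin\iota(N)$.

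The paper's proof follows the same overall architecture (induce characters from a central subgroup of a finite quotient and apply Lemma~\ref{lem:induced}), but makes one simplifying choice that you do not: it induces from the image of the \emph{full} center $Z=Z(\Gamma)$ rather than from the image of $N$. With that choice the case split becomes $x\notin Z$ versus $x\in Z\setminus N$. The first is handled by Lemma~\ref{lem:induced}(i) exactly as in your easy case. The second is handled by picking a dense sequence $(\omega_i)$ in $(Z/N)\,\widehat{}\,$, setting $\eta_n=\omega_1\oplus\cdots\oplus\omega_n$, and approximating $\eta_n$ on finite sets by representations $\gamma_n\circ\pi_n$ coming from the finite quotients; since characters of $Z/N$ obviously separate $Z/N$, no profinite computation is needed. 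In effect the paper replaces your delicate analysis of $\overline N\setminus N$ by the trivial observation that $\widehat{Z/N}$ separates $Z\setminus N$ from $N$, at the cost of inducing from a possibly larger central subgroup. Your approach, by contrast, stays closer to $N$ and extracts the needed characters directly from the profinite closure; this is more work but makes explicit exactly which characters are doing the separating, and your diagonal assembly $\Psi_k=\bigoplus_{i\leq k}\psi_i^{(k)}$ is a clean substitute for the paper's device of replacing $\Gamma_n$ by $\Gamma_1\oplus\cdots\oplus\Gamma_{m(n)}$.
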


\begin{proof}
  We will construct a sequence of finite-dimensional unitary
  representations $\sigma_n$ of $\Gamma$, such that
  \begin{equation}
    \label{eq:sepfam}
    \lim_{n\rightarrow\infty} \V \sigma_n(x)-1 \V = 0
    \quad \textrm{ if and only if }\quad x\in N.
  \end{equation}
  In particular, this will prove that $\Gamma/N$ is MF.  Indeed,
  writing $\sigma_n\colon \Gamma\to U(k(n))$, one sees that the map of
  $G/N$ to $U( \prod_n M_{k(n)} / \sum_{n} M_{k(n)} )$ given by
  $x\mapsto (\sigma_n(x))$ is an embedding.

  Let $L_n$, and $\pi_n:\Gamma\to \Gamma_n:=\Gamma/L_n$ be as in
  Remark~\ref{rem:wyswyg}. Let $Z$ be the center of $\Gamma$ and set
  $Z_n=\pi_n(Z)\cong Z/Z\cap L_n$. The restriction of $\pi_n$ to $Z$
  is denoted again by $\pi_n:Z\to Z_n$.  Let $\hat{\pi}_n:\hat{Z}_n\to
  \hat{Z}$ be the dual map of the restriction of $\pi_n$ to $Z$. It
  follows that the union of $\hat{\pi}_n(\hat{Z}_n)$ is dense in
  $\hat{Z}$ as noted in Remark~\ref{rem:wyswyg} applied to $Z$ and its
  finite index subgroups $Z\cap L_n$.  Let $(\omega_i)_{i\geq 1}$ be a
  dense sequence in the Pontriagin dual $(Z/N)\,\hat{}$. We will
  regard $\omega_i$ as characters on $Z$ that are trivial on $N$.  Set
  $\eta_n=\omega_1\oplus\cdots\oplus \omega_n:Z\to U(n)$.  Let us
  observe that
  \begin{equation}
    \label{eqn:10012}
    \lim_{n\to \infty}\|\eta_n(x)-1\|=0,
    \quad \text{if and only if}\quad x\in N.
  \end{equation}
  Write $Z$ as an increasing union of finite subsets $F_n$.  Since the
  union of $\hat{\pi}_n(\hat{Z}_n)$ is dense in $\hat{Z}$, we can
  replace $\Gamma_n$ by $\Gamma_1\oplus \Gamma_2\oplus \cdots \oplus
  \Gamma_{m(n)}$, $\pi_n$ by $\pi_1\oplus \pi_2\oplus \cdots \oplus
  \pi_{m(n)}$ and $Z_n$ by $Z_1\oplus Z_2\oplus \cdots \oplus
  Z_{m(n)}$ so that in the new setup $\pi_n(Z)\subset Z_n$, $Z_n$ is a
  central subgroup of $\Gamma_n$ and the following properties will
  hold.

  \begin{itemize}
  \item[(i)] For each $n\geq 1$ there is a unitary representation
    $\gamma_n: Z_n \to U(n)$ such that
    \begin{equation}
      \label{eqn:10013}
      \|\eta_n(x)-\gamma_n\circ\pi_n(x)\|<1/n, \quad\text{for all}\, x\in F_n.
    \end{equation}
  \item[(ii)] For any $x\in \Gamma\setminus Z$ there is $m$ such that
    $\pi_n(x)\notin Z_n$ for all $n\geq m$.
  \end{itemize}
  Concerning (ii) let us note that if $x\in \Gamma$ and $\pi_n(x)\in
  Z_n$ for all $n\geq m$ then $\pi_n(xgx^{-1}g^{-1})=1$ for all $g\in
  \Gamma$ and $n\geq m$ and that implies that $x\in Z$ since the
  sequence $(\pi_n)_{n\geq m}$ separates the points of $\Gamma$.

  Define the finite dimensional unitary representation of $\Gamma$
  \begin{equation*}
    \sigma_n = ( \Ind^{\Gamma_n}_{Z_n}\gamma_n )\circ \pi_{n}.
  \end{equation*}
  Let $x\in  Z$.  By Lemma \ref{lem:induced}(2) we have
  \begin{equation*}
    \| \sigma_n(x)-1\| =\|\gamma_n\circ \pi_n(x)-1\|
  \end{equation*}
  and hence in conjunction with \eqref{eqn:10013}
  \begin{equation*}
    \lim_{n\to \infty} \| \sigma_n(x)-1\|=0 \, \Leftrightarrow  \,
    \lim_{n\to \infty} \|\gamma_n\circ \pi_n(x)-1\|=0
    \, \Leftrightarrow  \, \lim_{n\to \infty} \|\eta_n(x)-1\|=0.
  \end{equation*}
  By \eqref{eqn:10013}, it follows that $\V \sigma_n(x)-1
  \V\rightarrow 0$ if and only if $x\in N$.

  Now let $x\in \Gamma\setminus Z(\Gamma)$. By (ii) there is an $m$
  large enough so $\pi_{n}(x)\not\in Z_n$ for all $n\geq m$. By
  Lemma~\ref{lem:induced}(1), we have $\V \sigma_n(x)-1
  \V\geq\sqrt{2}$ for all $n\geq m$.

  This proves that \eqref{eq:sepfam} holds and therefore that
  $\Gamma/N$ is MF.
\end{proof}

\begin{corollary}\label{cor:on-center-MF}
  Let $\Gamma$ and $N$ be as in \ref{subsec:mf-but-not-lef}.  Then
  $\Gamma/N$ is MF but not LEF.  Since $\Gamma/N$ is amenable it also
  follows that $C^*(\Gamma/N)$ is quasidiagonal.
\end{corollary}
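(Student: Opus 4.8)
The plan is to reduce the corollary to Theorem~\ref{thm:on-center-MF}, whose hypotheses are that $\Gamma$ be a countable discrete residually finite group and $N$ a central subgroup, with amenability of $\Gamma$ as an additional hypothesis for the quasidiagonality conclusion. Most of what the statement asserts has in fact already been recorded in Section~\ref{subsec:mf-but-not-lef}: the subgroup $N$ is central by construction ($N\subseteq Z(\Gamma)$); the group $\Gamma$ consists of upper triangular matrices, hence is solvable and therefore amenable, and so is its quotient $\Gamma/N$; and $\Gamma/N$ is finitely presented but not residually finite (its failure of the Hopf property together with Malcev's theorem that finitely generated residually finite groups are Hopfian), so by Remark~\ref{rem:rfd-vs-lef} it cannot be LEF. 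Thus the only genuinely new point to verify is that the ambient group $\Gamma$ is residually finite; granting this, Theorem~\ref{thm:on-center-MF} immediately yields that $\Gamma/N$ is MF and, by its amenability clause, that $C^*(\Gamma/N)$ is quasidiagonal.

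To establish residual finiteness of $\Gamma$ I would argue that it is a finitely generated linear group over a field and invoke Malcev's theorem. First, $\Gamma$ is finitely generated: the central subgroup $N\cong\Z$ is finitely generated and the quotient $\Gamma/N$ is finitely generated (being finitely presented by Abels \cite{Abels79}), so from the extension $1\to N\to\Gamma\to\Gamma/N\to 1$ one sees that $\Gamma$ is finitely generated — lift a finite generating set of $\Gamma/N$ to $\Gamma$ and adjoin a generator of $N$. Second, $\Gamma$ is linear over the field $\Q$, since its entries lie in $\ZH\subset\Q$. Malcev's theorem that every finitely generated linear group over a field is residually finite (see \cite{Malcev40}) then applies to give that $\Gamma$ is residually finite.

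With residual finiteness of $\Gamma$ in hand, $N$ central by construction, and $\Gamma$ amenable because it is solvable, Theorem~\ref{thm:on-center-MF} shows that $\Gamma/N$ is MF and that $C^*(\Gamma/N)$ is quasidiagonal. Combined with the observation that $\Gamma/N$ is not LEF, recalled above from Section~\ref{subsec:mf-but-not-lef}, this is precisely the assertion of the corollary.

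The step I expect to demand the most care is the verification that $\Gamma$ itself is residually finite, and in particular the finite-generation claim, since $\Gamma$ has an infinitely generated centre $Z(\Gamma)\cong\ZH$ and one cannot read off finite generation of $\Gamma$ naively from its matrix description. The extension argument is what makes this go through cleanly; the alternative of exhibiting an explicit finite generating set (the elementary unipotent matrices together with the two diagonal elements $\operatorname{diag}(1,p,1,1)$ and $\operatorname{diag}(1,1,p,1)$, using that conjugation by the diagonal torus rescales the unipotent entries by powers of $p$ and hence sweeps out all of $\ZH$ from integer entries) is feasible but considerably more laborious.
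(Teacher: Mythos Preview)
Your proposal is correct and follows the same approach as the paper, which simply invokes Theorem~\ref{thm:on-center-MF} and recalls from the preceding discussion that $\Gamma/N$ is not LEF. You supply a detail the paper leaves implicit, namely the residual finiteness of $\Gamma$; your argument via finite generation and Malcev's theorem is valid, though one can bypass finite generation altogether by observing that $\Gamma\subset \GL_4\!\big(\ZH\big)$ and that the congruence maps $\GL_4\!\big(\ZH\big)\to \GL_4(\Z/q^k\Z)$ for primes $q\neq p$ already separate points.
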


\begin{proof}
  This follows from Theorem~\ref{thm:on-center-MF}.  We have already
  noted that $\Gamma/N$ cannot be LEF since it is finitely presented
  but not residually finite.
\end{proof}

\begin{remark}
  In \cite[Section~3.2]{Thom08a} Thom shows that every quotient of a
  hyperlinear group by a central subgroup is again hyperlinear.  He
  gives an example of a group $K$ with property (T) that is not
  initially subamenable but is hyperlinear.  The proof of
  Theorem~\ref{thm:on-center-MF} and his proof that $K$ is hyperlinear
  have a similar flavor (cf. \cite[Section~3.2]{Thom08a}), but we are
  unaware of any direct connections.  We also note that these examples
  are related to the example of Abels used above (see also
  \cite{Cornulier07}).
\end{remark}

\section{Strong quasidiagonality and groups}
\label{sec:strong-quas-groups}

We exhibit some classes of amenable groups that have non-strongly
quasidiagonal C*-algebras.  See Theorem~\ref{thm:nonSQD}.  All of these
groups arise as wreath products.  We do not know if these C*-algebras
are quasidiagonal, except for a certain subclass.  See Proposition
~\ref{prop:wreath-qd}.

Let us establish some notation related to crossed product C*-algebras.
(We refer the reader to \cite[Section 4.1]{Brown-Ozawa08} for
details.)  Let $A$ be a unital C*-algebra, $\Gamma$ a discrete
countable group, and $\alpha\colon\Gamma\rightarrow \text{Aut}(A)$ a
homomorphism.  A $*$-representation $(\pi,H)$ of $A$ induces
$*$-representation $\pi\times\lambda$ of $A\rtimes_\alpha \Gamma$ on
$B(H\otimes\ell^2\Gamma)$ defined by
\begin{align}
  \label{eq:cross-rep}
  (\pi\times\lambda)(a) (\xi\otimes \delta_t)
  & = \pi\big( \alpha_{t^{-1}}(a) \big)(\xi)\otimes \delta_{t}\\
    (\pi\times\lambda)(s) (\xi\otimes \delta_t)
  & = \xi\otimes \delta_{st} \notag
\end{align}
for $a\in A$, $s, t\in \Gamma$, $\xi\in H$, and where
$\{\delta_s\}_{s\in \Gamma}$ is the usual orthonormal basis of
$\ell^2\Gamma$.

We denote by $A^{\otimes\Gamma}$ as the $\Gamma$-fold maximal tensor
product of $A$ with itself.  This infinite tensor product is defined
as an inductive limit indexed by finite subsets of $\Gamma$.  The
Bernoulli action $\beta$ of $\Gamma$ on $A^{\otimes\Gamma}$ may be
described formally by
\begin{equation*}
  \beta_s(a_{t_1}\otimes \cdots \otimes a_{t_n})
  = a_{st_1}\otimes \cdots \otimes a_{st_n}.
\end{equation*}

The proof of the next proposition was inspired by Theorem
25~\cite{Hadwin87}.
\begin{proposition}
  \label{prop:Hadwingen}
  Let A be a unital C*-algebra which is generated by two proper
  two-sided closed ideals.  Let $\Gamma$ be a discrete countable
  non-torsion group.  Then $A^{\otimes \Gamma}\rtimes_\beta \Gamma $
  has a non-finite quotient. In particular, it is not strongly
  quasidiagonal.
\end{proposition}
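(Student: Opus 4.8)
The plan is to exhibit an \emph{infinite} projection in a suitable quotient of $A^{\otimes\Gamma}\rtimes_\beta\Gamma$. A unital $C^*$-algebra containing an infinite projection is not finite, and this settles the ``in particular'' clause via two standard facts: a unital quasidiagonal $C^*$-algebra is finite (by Voiculescu's characterization, Theorem~\ref{thm:voiculescu-charac-qd}, the asymptotically multiplicative contractive c.p.\ maps into matrix algebras force every isometry to be a unitary), and if $B$ is strongly quasidiagonal then every quotient $B/K$ is quasidiagonal (compose a faithful representation of $B/K$ with the quotient map to get a representation of $B$, which is quasidiagonal by hypothesis). Since $A^{\otimes\Gamma}\rtimes_\beta\Gamma$ is unital, a non-finite quotient is non-quasidiagonal, so the algebra is not strongly quasidiagonal.

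First I would simplify the coefficient algebra. Writing the two proper ideals as $I,J$, the hypothesis means $A=I+J$ (the sum of two closed ideals of a $C^*$-algebra is closed), so the Chinese Remainder map $A\to A/I\oplus A/J$ is surjective with kernel $I\cap J$; set $\bar A:=A/(I\cap J)\cong B_1\oplus B_2$ with $B_1,B_2$ nonzero and unital. Applying $A\to\bar A$ in every tensor factor gives a $\Gamma$-equivariant surjection $A^{\otimes\Gamma}\to\bar A^{\otimes\Gamma}$, hence a surjection of (full) crossed products $A^{\otimes\Gamma}\rtimes_\beta\Gamma\to\bar A^{\otimes\Gamma}\rtimes_\beta\Gamma$. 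As a quotient of a quotient is a quotient, it suffices to find a non-finite quotient of $\bar A^{\otimes\Gamma}\rtimes_\beta\Gamma$. The payoff is that $\bar A$ now carries complementary \emph{central} projections $e_1,e_2$, with $e_1+e_2=1$, $e_1e_2=0$, both nonzero.

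The heart of the matter is choosing the right quotient. The central projections $e_i^{(t)}$ ($t\in\Gamma$, $i=1,2$) generate a central copy of $C(\{1,2\}^\Gamma)$, exhibiting $\bar A^{\otimes\Gamma}$ as a $\Gamma$-equivariant $C(\{1,2\}^\Gamma)$-algebra (a continuous field over the full shift, with evaluation $*$-homomorphisms onto the nonzero fibres $\bigotimes_{t}B_{\omega(t)}$). Here is where non-torsionness enters: I fix $g\in\Gamma$ of infinite order and the \emph{heteroclinic} point $x_0\in\{1,2\}^\Gamma$ defined by $x_0(g^n)=2$ for $n\ge 0$ and $x_0(t)=1$ otherwise, and let $X=\overline{\Gamma x_0}$ be its closed, $\Gamma$-invariant orbit closure. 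Restricting the continuous field to $X$ yields a $\Gamma$-equivariant quotient $\bar A^{\otimes\Gamma}\to D_X$, and I pass to the quotient crossed product $D_X\rtimes_\beta\Gamma$.

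Finally I would show that the image $\bar q$ of the single-site projection $e_2^{(e)}$ is infinite in $D_X\rtimes_\beta\Gamma$. The clopen set $U=\{x:x(e)=2,\ x(g)=1\}$ misses the orbit of $x_0$ (a direct check, amounting to $[k\le 0]\le[k\le 1]$ along the $\langle g\rangle$-orbit) and hence misses $X$; this is exactly the inequality $\bar q\le\beta_g(\bar q)$ of central projections over $X$. Evaluating at $x_0$, where $e_2^{(e)}\mapsto 1$, gives $\bar q\neq 0$, while evaluating at $\beta_g x_0$, where $e_2^{(e)}\mapsto 0$ but $e_2^{(g)}=\beta_g(e_2^{(e)})\mapsto 1$, gives $\beta_g(\bar q)-\bar q\neq 0$; thus $\bar q\lneq\beta_g(\bar q)$. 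With $u_g$ the canonical unitary implementing $\beta_g$, the element $s:=u_g^{*}\bar q$ satisfies $s^{*}s=\bar q$ and $ss^{*}=\beta_{g^{-1}}(\bar q)\lneq\bar q$, so $\bar q$ is Murray--von Neumann equivalent to a proper subprojection of itself. Hence $D_X\rtimes_\beta\Gamma$ is non-finite, as required. The main obstacle is precisely this quotient construction: no finite-tensor projection nests under the Bernoulli shift, so the needed infinite projection cannot live in $\bar A^{\otimes\Gamma}$ itself and must be manufactured by collapsing onto the heteroclinic orbit closure; the technical overhead is verifying the continuous-field ($C(W)$-algebra) structure of the infinite maximal tensor product and the functoriality of the full crossed product under equivariant surjections.
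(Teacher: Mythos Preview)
Your argument is correct and shares the same core idea as the paper's proof---use an infinite-order element $g$ and a ``one-sided'' configuration so that the image of the single-site element $x_e$ (your $e_2^{(e)}$) becomes a projection that is shifted properly into itself by $u_g$---but the execution is genuinely different. The paper does not pass to $\bar A=A/(I\cap J)$ or invoke any $C(\{1,2\}^\Gamma)$-algebra structure; instead it fixes representations $\pi_i$ of $A$ with $\ker\pi_i=I_i$, chooses a coset decomposition $\Gamma=\bigsqcup_{s\in F}s\langle u\rangle$, splits $\Gamma=\Gamma_1\sqcup\Gamma_2$ according to the sign of the $\langle u\rangle$-exponent, forms the single representation $\pi=\bigl(\bigotimes_{\Gamma_1}\pi_1\bigr)\otimes\bigl(\bigotimes_{\Gamma_2}\pi_2\bigr)$ of $A^{\otimes\Gamma}$, and computes directly in the induced representation $\pi\times\lambda$ that $(\pi\times\lambda)(x_e)=1_H\otimes\chi_{\Gamma_1^{-1}}$, so that $V=(\pi\times\lambda)(u\cdot x_e)$ satisfies $V^*V-VV^*=1_H\otimes\chi_{\Gamma_1^{-1}\setminus u\Gamma_1^{-1}}>0$. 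Thus the paper's non-finite quotient is simply the image of $\pi\times\lambda$, obtained by a short Hilbert-space calculation with no continuous-field or orbit-closure machinery. Your route is more conceptual---it makes transparent \emph{why} such a projection exists, via the dynamics of the heteroclinic point in the full shift---at the cost of the $C(W)$-algebra and equivariant-quotient bookkeeping you flag at the end. Note also a small difference in the configuration: the paper uses the ``thick'' half-space $\Gamma_2$ over all cosets, whereas you use only the half-axis $\{g^n:n\ge 0\}$; both give the needed strict inclusion under the shift.
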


\begin{proof}
  Write $A=I_1+I_2$ where $I_i$ are proper two-sided closed ideals of
  $A$ and write $1_A=y+x$ where $y\in I_1$ and $x\in I_2$.  Let
  $\pi_i$ be a unital representations of $A$ with kernel $I_i$,
  $i=1,2$. Then $\pi_1(x)=1$ and $\pi_2(x)=0$.

  Let $u\in \Gamma$ be an element of infinite order. It generates a
  subgroup $\mathbb{Z}\leq \Gamma$.  Choose a subset $F\subseteq
  \Gamma$ of left coset representatives, that is,
  \begin{equation*}
    \Gamma= \bigsqcup_{s\in F}g\mathbb{Z}
  \end{equation*}
  Set
  \begin{equation*}
    \Gamma_1= \{ su^n:n<0\textrm{ and }s\in F \},
    \quad \Gamma_2=\{ su^n:n\geq0\textrm{ and }s\in F \}
  \end{equation*}
  and observe that $u\Gamma_1^{-1}$ is a proper subset of
  $\Gamma_1^{-1}$.  Define the representation $(\pi,H)$ of
  $A^{\otimes\Gamma}$ by
  \begin{equation*}
    \pi:= \bigg(\bigotimes_{s\in \Gamma_1}\pi_1\bigg)
    \otimes \bigg(\bigotimes_{s\in \Gamma_2}\pi_2\bigg).
  \end{equation*}

  For $t\in \Gamma$, let $x_t\in A^{\otimes\Gamma}$ be the elementary
  tensor with $x$ in the ``$t$''-position and $1$ elsewhere, in
  particular $\beta_g(x_t)=x_{gt}$. It follows from the properties of
  $\pi_1$, $\pi_2$ and $x$ that
  \begin{equation}\label{eq:good}
    \pi(x_t)=1_H\text{ if } t\in \Gamma_1,
    \quad
    \text{and}
    \quad
    \pi(x_t)=0\text{ if } t \in \Gamma_2.
  \end{equation}
  For a set $S\subset \Gamma$ we denote by $\chi_S$ the characteristic
  function of $S$ as well as the corresponding multiplication operator
  by $\chi_S$ on $\ell^2\Gamma$.  Using \eqref{eq:cross-rep} and
  \eqref{eq:good} one verifies immediately that
  \[(\pi\times\lambda)(x_e)=1_H\otimes \chi_{\Gamma_1^{-1}}.\]
  Define the partial isometry
  \begin{equation*}
    V:= (\pi\times\lambda)(u)\cdot (\pi\times \lambda)(x_e)
      = 1_H\otimes \lambda(u)\chi_{\Gamma_1^{-1}} .
  \end{equation*}
  Then $V^*V=1_H\otimes \chi_{\Gamma_1^{-1}}$ and $VV^* =1_H\otimes
  \chi_{u\Gamma_1^{-1}}$.  It follows that $V^*V - VV^* = 1\otimes
  \chi_{\Gamma_1^{-1}\setminus u\Gamma_1^{-1}} > 0$.
\end{proof}

\begin{corollary}
  \label{cor:nontcenter}
  Let $A$ be a unital C*-algebra which admits a quotient with
  nontrivial center.  Let $\Gamma$ be a discrete countable non-torsion
  group.  Then $A^{\otimes \Gamma}\rtimes \Gamma$ has a non-finite
  quotient.
\end{corollary}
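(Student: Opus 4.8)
The plan is to reduce the statement to Proposition~\ref{prop:Hadwingen} by showing that a unital C*-algebra with nontrivial center is automatically a sum of two proper closed two-sided ideals, and then to transport the conclusion along a quotient map. Concretely, let $B = A/J$ be the quotient of $A$ with $Z(B) \neq \C\, 1_B$. I would first produce proper closed two-sided ideals $I_1, I_2 \subseteq B$ with $B = I_1 + I_2$, so that Proposition~\ref{prop:Hadwingen} applies to $B$ and yields a non-finite quotient of $B^{\otimes \Gamma} \rtimes_\beta \Gamma$. Finally I would observe that $B^{\otimes \Gamma} \rtimes_\beta \Gamma$ is itself a quotient of $A^{\otimes \Gamma} \rtimes_\beta \Gamma$, so that this non-finite quotient is also a non-finite quotient of $A^{\otimes \Gamma} \rtimes_\beta \Gamma$.

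For the construction of $I_1, I_2$, write $Z(B) \cong C(X)$, where $X$ is the spectrum of the center; nontriviality of $Z(B)$ means that $X$ has at least two points. Fix distinct $s, t \in X$ and, for $x \in X$, set $m_x = \{ f \in Z(B) : f(x) = 0 \}$. Let $I_1 = \overline{B\, m_s}$ and $I_2 = \overline{B\, m_t}$, the closed two-sided ideals of $B$ generated by these central elements. By Urysohn's lemma there is $\phi \in Z(B)$ with $\phi(s) = 0$ and $\phi(t) = 1$; then $\phi \in m_s$ and $1_B - \phi \in m_t$, so $1_B = \phi + (1_B - \phi) \in I_1 + I_2$, and hence $B = I_1 + I_2$.

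The main point to check is that $I_1$ and $I_2$ are proper, and this is where I expect the only real work. I would show that $I_x := \overline{B\, m_x}$ lies in the kernel of a nonzero representation of $B$. Extend the character $f \mapsto f(x)$ of $Z(B)$ to a state $\omega$ on $B$ and pass to its GNS representation $(\pi_\omega, H_\omega, \xi)$. For $f \in m_x$ one has $f^*f \in m_x$, so $\|\pi_\omega(f)\xi\|^2 = \omega(f^*f) = |f(x)|^2 = 0$; since $f$ is central, $\pi_\omega(f)$ annihilates $\pi_\omega(B)\xi$ and hence all of $H_\omega$, giving $\pi_\omega(m_x) = 0$ and therefore $\pi_\omega(I_x) = 0$. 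As $\pi_\omega(1_B) = 1 \neq 0$, the ideal $I_x$ is proper. (Equivalently, this is the statement that every fiber of $B$, viewed as a $C(X)$-algebra over the spectrum of $Z(B)$, is nonzero.) Thus $B$ satisfies the hypotheses of Proposition~\ref{prop:Hadwingen}.

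It remains to justify the transport step. The quotient map $A \to B$ induces a $\Gamma$-equivariant surjection $A^{\otimes \Gamma} \to B^{\otimes \Gamma}$ of the maximal tensor products (the maximal tensor product preserves surjections, and the infinite tensor product is the inductive limit of the finite ones), and this map intertwines the Bernoulli actions. Functoriality of the full crossed product for equivariant surjections then gives a surjection $A^{\otimes \Gamma} \rtimes_\beta \Gamma \to B^{\otimes \Gamma} \rtimes_\beta \Gamma$. A non-finite quotient of the target is therefore a non-finite quotient of the source, which completes the proof.
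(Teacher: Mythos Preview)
Your proof is correct and follows essentially the same approach as the paper's: pass to the quotient $B$, pick two distinct maximal ideals of $Z(B)$ (equivalently, two points of its spectrum), and use the ideals of $B$ they generate to verify the hypotheses of Proposition~\ref{prop:Hadwingen}. The paper's proof is the four-line version of yours---it simply writes $Z(B)=J_1+J_2$ for two maximal ideals and sets $I_i=J_iB$---while you supply the details (Urysohn for $B=I_1+I_2$, a GNS argument for properness, and the equivariant-surjection/crossed-product functoriality) that the paper leaves implicit.
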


\begin{proof}
  If $B$ is a quotient of $A$, then $B^{\otimes \Gamma}\rtimes \Gamma$
  is a quotient of $A^{\otimes \Gamma}\rtimes \Gamma$. Thus we may
  assume that the center $Z(A)$ is non-trivial. Write $Z(A)$ as the
  sum of two maximal ideals $Z(A)=J_1+J_2$.  We conclude the proof by
  applying Proposition~\ref{prop:Hadwingen} for the ideals $I_1=J_1A$ and
  $I_2=J_2A$.
\end{proof}

\begin{remark}
  \label{rem:maxideal}
  The condition on $A$ in Proposition~\ref{prop:Hadwingen} is
  equivalent to the requirement that the primitive spectrum of $A$
  contains two non-empty disjoint closed subsets.  It is not hard to
  see that the primitive spectrum of a separable $A$ contains two
  distinct closed points if and only if $A$ has two distinct maximal
  ideals. Moreover, in this case $A$ admits a quotient with nontrivial
  center.
\end{remark}

Although we state the next result in greater generality, perhaps the
most interesting case is when the groups are amenable.

\begin{theorem}
  \label{thm:nonSQD}
  Let $\Gamma$ be a discrete countable non-torsion group and let
  $\Lambda$ be any discrete countable group such that either
  \begin{enumerate}
  \item $\Lambda$ admits a finite dimensional
    representation other than the trivial representation, or
  \item $\Lambda$ has a finite conjugacy class.
  \end{enumerate}
  Then, $C^*(\Lambda \wr \Gamma)$ has a non-finite quotient.
\end{theorem}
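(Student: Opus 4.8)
The plan is to identify $C^*(\Lambda\wr\Gamma)$ with a crossed product of exactly the form treated in Proposition~\ref{prop:Hadwingen} and then to verify that the tensor factor $C^*(\Lambda)$ meets the ideal hypothesis of that proposition. Writing $\Lambda\wr\Gamma=\big(\bigoplus_{\Gamma}\Lambda\big)\rtimes\Gamma$, with $\Gamma$ acting by the coordinate shift, and setting $A:=C^*(\Lambda)$ (which is unital and separable because $\Lambda$ is a countable group), I would first establish
\[
  C^*(\Lambda\wr\Gamma)\;\cong\;A^{\otimes\Gamma}\rtimes_\beta\Gamma ,
\]
where $\beta$ is the Bernoulli action. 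This rests on two standard facts about \emph{full} group C*-algebras: that $C^*(N\rtimes\Gamma)\cong C^*(N)\rtimes\Gamma$ for a semidirect product, and that $C^*\big(\bigoplus_{\Gamma}\Lambda\big)$ is the $\Gamma$-fold maximal tensor product $A^{\otimes\Gamma}$, realized as the inductive limit over finite subsets of $\Gamma$ exactly as fixed before Proposition~\ref{prop:Hadwingen}; under these identifications the shift becomes precisely $\beta$. Matching the \emph{maximal} tensor product and the \emph{full} crossed product so that Proposition~\ref{prop:Hadwingen} applies verbatim is the step that most needs care, and I regard it as the main obstacle.

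Granting this identification, by Corollary~\ref{cor:nontcenter} it suffices to show that under either hypothesis $A=C^*(\Lambda)$ admits a quotient with nontrivial center; equivalently, by Remark~\ref{rem:maxideal}, that $A$ has two distinct maximal ideals. For hypothesis (1), the trivial representation is a character $\tau$ of $A$, so $\ker\tau$ is a maximal ideal with quotient $\C$. A finite-dimensional representation $\rho$ that is not a multiple of the trivial one satisfies $\rho(g)\neq 1$ for some $g\in\Lambda$; decomposing $\rho$ into irreducibles yields an irreducible constituent $\rho_0$ with $\rho_0(g)\neq 1$, so $\rho_0$ is not $\tau$. Since $\rho_0$ is irreducible its image is a full matrix algebra $M_d$, so $\ker\rho_0$ is maximal with quotient $M_d$; as $A/\ker\tau\cong\C$ and $A/\ker\rho_0\cong M_d$ cannot coincide (if $d\ge 2$ they are non-isomorphic, and if $d=1$ then $\rho_0$ is a character different from $\tau$), we get $\ker\rho_0\neq\ker\tau$, i.e. two distinct maximal ideals.

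For hypothesis (2), let $C=\{g_1,\dots,g_k\}$ be a nontrivial finite conjugacy class, so that $e\notin C$, and let $u_g\in A$ denote the canonical unitary of $g$. The element $z=\sum_{i=1}^k u_{g_i}$ is central, since conjugation by any $h\in\Lambda$ permutes $C$, and it is not a scalar: the $u_g$ are linearly independent in $C^*(\Lambda)$ (orthonormal for the canonical trace) and none of the $g_i$ equals $e$. Hence $Z(A)\supsetneq\C 1$, so $A$ is a quotient of itself with nontrivial center. Note that the finiteness of $C$ is exactly what makes $z$ a well-defined element, which is why hypothesis (2) demands a \emph{finite} conjugacy class.

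In either case Corollary~\ref{cor:nontcenter} applies to $A^{\otimes\Gamma}\rtimes_\beta\Gamma\cong C^*(\Lambda\wr\Gamma)$ and produces a non-finite quotient, completing the argument. Beyond the structural identification flagged above, the only delicate point is the extraction in hypothesis (1) of a genuinely nontrivial irreducible constituent, so that one obtains a maximal ideal distinct from the augmentation ideal $\ker\tau$ rather than merely recovering a multiple of the trivial representation.
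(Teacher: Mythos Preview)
Your proposal is correct and follows essentially the same approach as the paper: identify $C^*(\Lambda\wr\Gamma)\cong C^*(\Lambda)^{\otimes\Gamma}\rtimes_\beta\Gamma$, and then for (1) exhibit two distinct maximal ideals of $C^*(\Lambda)$ coming from the trivial representation and an irreducible constituent of the given nontrivial finite-dimensional representation, while for (2) sum over a nontrivial finite conjugacy class to produce a nonscalar central element. The only cosmetic difference is that the paper invokes Proposition~\ref{prop:Hadwingen} directly for case (1) (two maximal ideals already generate $A$), whereas you route both cases through Corollary~\ref{cor:nontcenter} via Remark~\ref{rem:maxideal}; your extra care in explicitly extracting a nontrivial irreducible constituent and in noting that the conjugacy class must be $\neq\{e\}$ is well placed.
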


\begin{proof}
  We first notice that $C^*(\Lambda \wr \Gamma)\cong
  C^*(\Lambda)^{\otimes \Gamma}\rtimes_\beta \Gamma$. We first assume
  (1).  By assumption, there are two inequivalent finite dimensional
  irreducible representations $\pi_1$ and $\pi_2$ of $C^*(\Lambda)$.
  Setting $I_i=\ker(\pi_i)$, $i = 1,2$, we see that $I_1$ and $I_2$
  are distinct maximal ideals that satisfy the hypothesis of
  Proposition~\ref{prop:Hadwingen}.

  Now assume (2). It is well known that if $\Lambda$ has a finite
  conjugacy class, then $C^*(\Lambda)$ has a non-trivial center
  (simply add up the elements of the conjugacy class to produce a
  central element). The conclusion follows from Corollary
  \ref{cor:nontcenter}.
\end{proof}

We observe that if $\Gamma$ is as in Theorem~\ref{thm:nonSQD} and
$\Lambda$ is not amenable, then the same conclusion follows.  Indeed,
in this case the C*-algebra $C^*_r(\Lambda)$ cannot have a
character. Thus $\ker(\lambda)$ is not contained in the kernel
$I_1\lhd C^*(\Lambda)$ of the trivial representation, but in some
other maximal ideal $I_2$ of $C^*(\Lambda)$.  Hence $I_1$ and $I_2$
are distinct maximal ideals of $C^*(\Lambda)$ and we can apply
Proposition~\ref{prop:Hadwingen}.

As a special case of Theorem~\ref{thm:nonSQD} we obtain the simplest
example we know of an amenable group with a non-strongly quasidiagonal
C*-algebra.

\begin{corollary}
  \label{cor:lamplighter}
  The C*-algebra of the group $\Z/2\Z \wr \Z$ is not strongly
  quasidiagonal.
\end{corollary}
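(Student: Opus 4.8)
The plan is to realize the lamplighter group as a wreath product fitting the hypotheses of Theorem~\ref{thm:nonSQD} and then apply that theorem verbatim. Writing $\Z/2\Z \wr \Z = \Lambda \wr \Gamma$ with $\Lambda = \Z/2\Z$ and $\Gamma = \Z$, I would first check that $\Gamma = \Z$ is non-torsion, which is immediate since every nonzero integer has infinite order. Next I would verify that $\Lambda = \Z/2\Z$ satisfies one of the two alternatives. In fact it satisfies both: it is a finite abelian group, so it carries the nontrivial character sending the generator to $-1$, a finite-dimensional representation other than the trivial one (condition (1)); and being abelian, every conjugacy class is a singleton and hence finite (condition (2)). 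Either alternative suffices.

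Granting the hypotheses, Theorem~\ref{thm:nonSQD} tells us that $C^*(\Z/2\Z \wr \Z)$ has a non-finite quotient, and it remains only to deduce failure of strong quasidiagonality. For this I would invoke the same principle underlying Proposition~\ref{prop:Hadwingen}: a set of quasidiagonal operators generates a quasidiagonal, and therefore stably finite, C*-algebra, so no non-finite C*-algebra of operators can be quasidiagonal. Explicitly, if $q\colon C^*(\Z/2\Z \wr \Z)\to B$ is the quotient map onto a non-finite $B$ and $\sigma_0$ is a faithful representation of $B$, then $\sigma_0\circ q$ is a representation of $C^*(\Z/2\Z \wr \Z)$ whose image is isomorphic to $B$ and hence non-finite; by Definition~\ref{def:QD} this image fails to be a quasidiagonal set of operators, so $C^*(\Z/2\Z \wr \Z)$ is not strongly quasidiagonal.

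I expect no genuine obstacle, since the corollary is a direct specialization: the substantive construction---the Bernoulli crossed-product representation and the partial isometry $V$ with $V^*V - VV^* > 0$ exhibiting non-finiteness---has already been carried out in Proposition~\ref{prop:Hadwingen} and Theorem~\ref{thm:nonSQD}. The only points demanding any attention are the elementary verification that $\Z/2\Z$ meets the conditions on $\Lambda$ and the standard fact that quasidiagonality forces stable finiteness; both are routine.
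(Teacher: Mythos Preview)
Your proposal is correct and matches the paper's approach exactly: the corollary is presented there simply as a special case of Theorem~\ref{thm:nonSQD} with $\Lambda=\Z/2\Z$ and $\Gamma=\Z$, and the passage from ``non-finite quotient'' to ``not strongly quasidiagonal'' is the same one already recorded in Proposition~\ref{prop:Hadwingen}.
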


Since $\Z/2\Z \wr \Z$ is residually finite (one may find a separating
family of homomorphisms $\pi_n\colon \Z/2\Z \wr \Z \to \Z/2\Z \wr
\Z/n\Z$), its C*-algebra is quasidiagonal (it is actually residually finite dimensional and in particular inner
quasidiagonal, see \cite{Blackadar-Kirchberg01} for relevant
definitions).  More generally, we have the following:

\begin{proposition}
  \label{prop:wreath-qd}
  Let $\Lambda$ be an amenable group that is a union of residually
  finite groups. Then the C*-algebra of the group $\Lambda \wr
  \mathbb{Z}^k$ is quasidiagonal.
\end{proposition}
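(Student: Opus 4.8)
The plan is to reduce the statement to the two characterizations already at hand: by Proposition~\ref{prop:lef-implies-mf} an LEF group is MF, and by Theorem~\ref{thm:MF=qd2} an amenable MF group has quasidiagonal C*-algebra. Hence it suffices to prove that $\Lambda\wr\mathbb{Z}^k$ is amenable and LEF. Amenability is immediate: the base group $\Lambda^{(\mathbb{Z}^k)}$ is a directed union of finite direct powers of the amenable group $\Lambda$, so it is amenable, and $\Lambda\wr\mathbb{Z}^k$ is an extension of $\mathbb{Z}^k$ by this base, hence amenable. Moreover $\Lambda$ itself is LEF, being a directed (increasing) union of residually finite — hence LEF — groups: any finite subset of $\Lambda$ lies in a single residually finite piece. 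So the entire content of the proposition is the claim that if $\Lambda$ is LEF then $\Lambda\wr\mathbb{Z}^k$ is LEF.

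To prove this claim I would fix a finite set $E\subset\Lambda\wr\mathbb{Z}^k=\Lambda^{(\mathbb{Z}^k)}\rtimes\mathbb{Z}^k$ and extract the finite data it involves: a finite set $S\subset\mathbb{Z}^k$ containing the supports of all lamp configurations occurring in $E$, a finite set $T\subset\mathbb{Z}^k$ of the shifts occurring in $E$, and a finite set $D\subset\Lambda$ of the lamp values occurring. Using that $\Lambda$ is LEF, I choose a finite group $Q$ and a map $\varphi\colon\Lambda\to Q$ that is injective and multiplicative on $D\cup D\cdot D$. I then fold the acting group: pick $m$ so large that the finite region $S^{\ast}:=S\cup(S+T)$ injects into $(\mathbb{Z}/m)^k$ and so does $T$, and define $\Phi\colon\Lambda\wr\mathbb{Z}^k\to Q\wr(\mathbb{Z}/m)^k$ by reducing the shift modulo $m$ and pushing a lamp configuration $f$ to $\bar f(\bar t)=\prod_{t\equiv\bar t}\varphi(f(t))$ (the product taken in a fixed order; for configurations supported in $S^{\ast}$ there is at most one factor, so no ordering ambiguity arises on the relevant elements).

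The heart of the argument is verifying that $\Phi$ is injective and multiplicative on $E$, with the finite target $Q\wr(\mathbb{Z}/m)^k$. Injectivity is routine: distinct shifts survive because $T\hookrightarrow(\mathbb{Z}/m)^k$, and distinct lamp values at a coordinate $t\in S$ survive because $\varphi$ is injective on $D$ while $S\hookrightarrow(\mathbb{Z}/m)^k$ separates that coordinate. For multiplicativity, folding is automatically equivariant for the shift action, so the only possible obstruction is that folding need not respect the coordinatewise multiplication when two support points collide in $(\mathbb{Z}/m)^k$ — and this is exactly what the choice of large $m$ rules out on $S^{\ast}$. Over each coset at most one support point of each factor is relevant, so computing $\Phi\big((f_1,v_1)(f_2,v_2)\big)$ reduces coordinate-by-coordinate to $\varphi\big(f_1(t)f_2(t-v_1)\big)=\varphi(f_1(t))\varphi(f_2(t-v_1))$, which holds since $\varphi$ is multiplicative on $D\cdot D$. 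Thus $\Phi$ is a local embedding of $E$ into a finite group, establishing the claim.

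I expect the main obstacle to be precisely this non-abelian collision phenomenon. Because $\Lambda$ need not be abelian, $\Lambda\wr\mathbb{Z}^k$ is in general \emph{not} residually finite (by Gruenberg's theorem; for instance $S_3\wr\mathbb{Z}$ is not), so there is no global finite-quotient homomorphism to exploit and the folding map $\Phi$ is only a partial homomorphism. The calibration of the modulus $m$ to the finite data $(S,T,D)$, which makes $\Phi$ multiplicative on the prescribed set $E$ even though it fails to be so globally, is the delicate point. This is exactly why LEF — a genuinely local notion — rather than residual finiteness is the correct tool here, after which Proposition~\ref{prop:lef-implies-mf} and Theorem~\ref{thm:MF=qd2} close the argument.
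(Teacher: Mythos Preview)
Your argument is correct but takes a genuinely different route from the paper. The paper argues C*-algebraically: after reducing to residually finite $\Lambda$, it embeds $C^*(\Lambda)$ into the $2^\infty$ UHF algebra $D$ via \cite{Dadarlat05}, so that $C^*(\Lambda\wr\mathbb{Z}^k)\cong\big(\bigotimes_{\mathbb{Z}^k}C^*(\Lambda)\big)\rtimes\mathbb{Z}^k$ sits inside $D\rtimes\mathbb{Z}^k$, which in turn embeds in a simple AF algebra by a theorem of N.~Brown \cite{Brown00a}. You instead work entirely on the group side, proving by the folding construction into $Q\wr(\mathbb{Z}/m)^k$ that $\Lambda\wr\mathbb{Z}^k$ is LEF, and then invoking Proposition~\ref{prop:lef-implies-mf} and Theorem~\ref{thm:MF=qd2}. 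Your approach is more self-contained relative to the paper's own machinery and generalizes immediately: the identical argument shows $\Lambda\wr\Gamma$ is LEF whenever $\Lambda$ is LEF and $\Gamma$ is residually finite, giving quasidiagonality of $C^*(\Lambda\wr\Gamma)$ for any such amenable $\Gamma$, not only $\mathbb{Z}^k$. The paper's approach, by contrast, yields the stronger conclusion of an explicit AF embedding but leans on results specific to $\mathbb{Z}^k$-actions.
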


\begin{proof}
  Write $\Lambda$ as an increasing union of residually finite groups
  $\Lambda_i$.  Then $\Lambda \wr \Z^k$ is the union of $\Lambda_i \wr
  \Z^k$. Therefore, we may assume that $\Lambda$ itself is residually
  finite.  By \cite{Dadarlat05}, $C^*(\Lambda)$ embeds in the
  $\mathrm{UHF}$ algebra of type $2^\infty$, denoted here by $D$.
  Then
  \[
  C^*(\Lambda \wr \mathbb{Z}^k)
  \cong
  \bigg( \bigotimes_{\Z^k} C^*(\Lambda) \bigg) \rtimes \Z^k
  \subset
  \bigg( \bigotimes_{\Z^k} D \bigg) \rtimes \Z^k
  \]
  and $\big(\bigotimes_{\Z^k} D\big) \rtimes \Z^k\cong D\rtimes
  \Z^k$ embeds in a simple unital AF algebra by a result of N.~Brown
  \cite{Brown00a}.
\end{proof}

If $C^*(\Lambda)$ has two distinct maximal ideals we do not need to
assume (1) or (2) in Theorem~\ref{thm:nonSQD} to obtain its
conclusion.  This raises an interesting question.

\begin{question}
  Are there any non-trivial groups $\Lambda$ such that $C^*(\Lambda)$
  has a unique maximal ideal? (Such a group would have to be
  amenable).
\end{question}

\subsection{Groups with strongly
quasidiagonal C*-algebras}
\label{subsec:strongly-quas-groups}

Now we exhibit some classes of (amenable) groups $\Gamma$ with
strongly quasidiagonal C$^*$-algebras.  These will arise as extensions
\begin{displaymath}
  1\to \Delta\to \Gamma\to \Lambda\to 1
\end{displaymath}
where $\Delta$ is a central subgroup of $\Gamma$, with some additional
hypotheses on $\Lambda$ and $\Delta$.  For example, we have the
following proposition.

\begin{proposition}
  \label{prop:abelian-by-abelian-sQD}
  Suppose $\Gamma$ has a central subgroup $\Delta$ such that both
  $\Delta$ and $\Gamma/\Delta$ are finitely generated abelian groups.
  Then $C^*(\Gamma)$ is strongly quasidiagonal.
\end{proposition}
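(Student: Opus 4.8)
The plan is to fix an arbitrary representation $\sigma\colon C^*(\Gamma)\to B(H)$ and show directly that the set $\sigma(C^*(\Gamma))$ is quasidiagonal. Observe first that $[\Gamma,\Gamma]\subseteq\Delta\subseteq Z(\Gamma)$, so $\Gamma$ is a finitely generated nilpotent group of class at most $2$; in particular it is amenable and residually finite. Since the commutators $[\sigma(\lambda_g),P]$ with $g$ ranging over a finite generating set control $[\sigma(a),P]$ for all $a\in C^*(\Gamma)$, it suffices to produce an increasing sequence of finite-rank projections $P_n\to 1_H$ (strongly) with $\V[\sigma(\lambda_g),P_n]\V\to 0$ for each generator $g$; equivalently, for every finite set of vectors and every $\epsilon>0$, one finite-rank projection that almost fixes the vectors and almost commutes with the generators. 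We may assume $H$ is separable.

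Because $\Delta$ is central, $C^*(\Delta)\cong C(\hat\Delta)$ lies in the center of $C^*(\Gamma)$ and $\hat\Delta$ is compact. First I would disintegrate $\sigma$ along the spectral measure $E$ of $\sigma|_{C^*(\Delta)}$, writing $H=\int^\oplus_{\hat\Delta}H_\chi\,d\mu(\chi)$ and $\sigma=\int^\oplus_{\hat\Delta}\sigma_\chi\,d\mu(\chi)$, where $\sigma_\chi(\lambda_d)=\chi(d)1_{H_\chi}$ for $d\in\Delta$. Fix lifts $g_1,\dots,g_r\in\Gamma$ of generators of $\Lambda:=\Gamma/\Delta$. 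Each $g_j$ commutes with $\Delta$, so $\sigma(\lambda_{g_j})$ commutes with $E$ and is decomposable, and on the fiber over $\chi$ the unitaries satisfy $\sigma_\chi(\lambda_{g_j})\sigma_\chi(\lambda_{g_k})=\chi([g_j,g_k])\,\sigma_\chi(\lambda_{g_k})\sigma_\chi(\lambda_{g_j})$. Thus $\sigma_\chi$ factors through the twisted group C*-algebra $A_\chi=C^*(\Lambda,\omega_\chi)$ of the finitely generated abelian group $\Lambda$, a (higher-dimensional) noncommutative torus, and the twisting scalars $\chi\mapsto\chi([g_j,g_k])$ depend continuously on the compact parameter space $\hat\Delta$.

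The key lemma I would isolate is that these fibers are strongly quasidiagonal: every twisted group C*-algebra of a finitely generated abelian group is strongly quasidiagonal. Via a further central decomposition such an algebra reduces to a commutative part, to finite-dimensional representations of uniformly bounded dimension, and to \emph{simple} noncommutative tori $A_\Theta$ (the non-degenerate directions of the cocycle). The commutative and bounded-finite-dimensional pieces are harmless, since finitely many commuting normals form a quasidiagonal set and bounded-dimensional C*-algebras are strongly quasidiagonal. For a simple factor $A_\Theta$ I would argue as follows: it is simple and infinite-dimensional, so every representation is faithful with zero compact part, i.e. $\rho(A_\Theta)\cap\mathcal K=\{0\}$; moreover $A_\Theta$ is quasidiagonal (the Weyl clock-and-shift matrices furnish approximately multiplicative finite-dimensional approximations, equivalently a faithful essential quasidiagonal representation $\pi_0$). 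Voiculescu's Weyl--von Neumann theorem then makes any two faithful representations with no compact part approximately unitarily equivalent, so \emph{every} representation of $A_\Theta$ is approximately unitarily equivalent to $\pi_0$ and hence, quasidiagonality being an approximate-unitary-equivalence invariant, is itself quasidiagonal.

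The main obstacle is the final assembly, because finite-rank projections on $H=\int^\oplus H_\chi\,d\mu$ are \emph{not} decomposable: over a non-atomic base a nonzero decomposable projection has infinite rank, so the fiberwise quasidiagonal data cannot simply be integrated. Here I would exploit the compactness of $\hat\Delta$ and the continuity of the twisting scalars: given $\epsilon>0$ and finitely many vectors, partition $\hat\Delta$ into finitely many Borel sets $O_1,\dots,O_L$ so small that each $\chi\mapsto\chi([g_j,g_k])$ varies by less than $\delta$ on $O_\ell$. On the reducing subspace $E(O_\ell)H$ the generators then form a $\delta$-almost representation of the single noncommutative torus $A_{\chi_\ell}$ for a chosen $\chi_\ell\in O_\ell$. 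The crux is the perturbative statement that a finite family of unitaries commuting up to fixed scalars within $\delta$ admits finite-rank projections almost fixing the prescribed vectors and almost commuting with the unitaries, with error tending to $0$ as $\delta\to 0$: for $\delta=0$ this is exactly the strong quasidiagonality of $A_{\chi_\ell}$ from the previous paragraph, and the genuine difficulty is the passage from the decomposable, fiberwise (and non-uniform) data to honest \emph{non-decomposable} finite-rank projections on $E(O_\ell)H$. I expect to handle this by a further honest disintegration inside $O_\ell$ combined with a Berg-type mixing construction, feeding in the simple-fiber absorption above together with the uniform control coming from compactness of $\hat\Delta$. Summing the resulting block projections over the finitely many $\ell$ and exhausting a dense sequence of vectors then yields the increasing sequence $P_n\to 1_H$, completing the proof.
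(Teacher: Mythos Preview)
Your approach is genuinely different from the paper's. The paper's proof is two lines: such a $\Gamma$ has finite decomposition rank by \cite[Theorem~2.2]{Carrion11}, and finite decomposition rank implies strong quasidiagonality by \cite[Theorem~5.3]{Kirchberg-Winter04}. You instead attempt a direct verification via disintegration over $\hat\Delta$, identifying the fibers as noncommutative tori $A_\chi=C^*(\Lambda,\omega_\chi)$ and arguing that these are strongly quasidiagonal. That fibrewise analysis is essentially sound (your Voiculescu argument for simple nuclear QD algebras is correct, though the ``further central decomposition'' reducing a general $A_\chi$ to simple tori deserves more care).

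The genuine gap is exactly where you flag it: the assembly. On $E(O_\ell)H$ you do \emph{not} have a representation of the single algebra $A_{\chi_\ell}$; you have a representation of $C^*(\Gamma)$ restricted to the ideal over $O_\ell$, which is still a continuous field over $O_\ell$. Your proposed fix amounts to a perturbation statement---unitaries satisfying the torus relations up to $\delta$ admit almost-commuting finite-rank projections with error $\to 0$---but this is precisely a form of stability/weak semiprojectivity that noncommutative tori do \emph{not} enjoy in general, and the appeal to ``a Berg-type mixing construction'' is not an argument. There is, however, a clean way to finish along your lines without any assembly: once you know each fiber $A_\chi$ is strongly quasidiagonal, observe that every primitive quotient of $C^*(\Gamma)$ is a primitive quotient of some fiber (this is standard for continuous fields, cf.\ \cite[Theorem~10.4.3]{Dixmier77}), hence quasidiagonal; then Proposition~5 of \cite{Hadwin87} gives strong quasidiagonality of $C^*(\Gamma)$ directly. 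The paper uses exactly this device in the proof of the next proposition.
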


\begin{proof}
  Theorem~2.2 of \cite{Carrion11} shows that such groups have finite
  decomposition rank.  A C*-algebra with finite decomposition rank
  must be strongly quasidiagonal, as proved in
  \cite[Theorem~5.3]{Kirchberg-Winter04}.
\end{proof}

The use of decomposition rank only serves to simplify the exposition,
although it is perhaps of interest in itself.  Proving strong
quasidiagonality in all the cases obtained here could be done using
analogous permanence properties of strong quasidiagonality.

\begin{lemma}
  \label{lem:cont-field-QD-fibers}
  Let $A$ be a separable continuous field C*-algebra over a locally
  compact and metrizable space $X$.  Write $A_x$ for the fiber at
  $x\in X$.  If $A$ is nuclear, then the set
  \[
  \XQD := \{ x\in X : A_x\text{ is quasidiagonal}\, \}
  \]
  is closed.
\end{lemma}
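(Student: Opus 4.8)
The plan is to prove that the complement $X \setminus \XQD$ is open, i.e., that having a non-quasidiagonal fiber is an open condition on $X$. So I would fix a point $x_0 \in X$ with $A_{x_0}$ not quasidiagonal and produce a neighborhood $U$ of $x_0$ such that $A_x$ is not quasidiagonal for all $x \in U$. The right tool is the negation of Voiculescu's local characterization (Theorem~\ref{thm:voiculescu-charac-qd}), suitably made ``uniform'': since each fiber $A_x$ is a quotient of the nuclear algebra $A$ and hence nuclear, quasidiagonality of $A_x$ is equivalent to $A_x$ being MF, and failure of quasidiagonality means there is a finite set $\mathcal{F} \subset A$ (restricting to a finite set in $A_{x_0}$), a tolerance $\delta > 0$, and an $\epsilon > 0$ such that \emph{no} contractive completely positive map $\phi \colon A_{x_0} \to M_k(\C)$ (any $k$) can simultaneously satisfy $\|\phi(\bar{a}\bar{b}) - \phi(\bar{a})\phi(\bar{b})\| < \delta$ for $\bar{a},\bar{b}$ the images of elements of $\mathcal{F}$ and keep $\|\phi(\bar{a})\|$ within $\epsilon$ of $\|\bar{a}\|$ for all $\bar{a}$. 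This is the quantitative obstruction I would propagate to nearby fibers.

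The key step is a continuity-of-norm argument for continuous fields. First I would lift $\mathcal{F}$ to elements of $A$ and use the defining property of a continuous field: for each $a \in A$ the map $x \mapsto \|a(x)\|_{A_x}$ is continuous (indeed upper semicontinuous in general, and continuous for a continuous field). Then I would argue by contradiction: if $x_0$ is not in the interior of the complement, there is a sequence $x_n \to x_0$ with $A_{x_n}$ quasidiagonal, hence MF, so for each $n$ there is a c.c.p.\ map $\phi_n \colon A_{x_n} \to M_{k_n}(\C)$ that is \emph{almost multiplicative and almost isometric} on the images of $\mathcal{F}$ with tolerances $1/n$. The goal is to assemble from the $\phi_n$ a single c.c.p.\ map on $A_{x_0}$ contradicting the obstruction at $x_0$. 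To transport $\phi_n$ from the fiber at $x_n$ to the fiber at $x_0$, I would precompose with a local trivialization or, more robustly, use that the field structure lets one choose continuous sections through $\mathcal{F}$ and evaluate them at both $x_n$ and $x_0$; the near-equality $\|a(x_n)\| \to \|a(x_0)\|$ then forces $\phi_n$, viewed as acting on the common ``symbols'' of $\mathcal{F}$, to be asymptotically isometric and multiplicative for $A_{x_0}$ as $n \to \infty$. Passing to the tail gives a map violating the obstruction, a contradiction.

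The main obstacle I expect is precisely this transport step: a c.c.p.\ map is defined on the fiber $A_{x_n}$, not on $A_{x_0}$, and there is no canonical identification of distinct fibers in a general continuous field. The clean way around it is to work not with the fibers directly but with the ambient algebra $A$ and its quotient maps $\mathrm{ev}_x \colon A \to A_x$. I would instead define $\psi_n := \phi_n \circ \mathrm{ev}_{x_n} \colon A \to M_{k_n}(\C)$, a c.c.p.\ map on the fixed algebra $A$. By continuity of the field, $\|\mathrm{ev}_{x_n}(a) \mathrm{ev}_{x_n}(b) - \mathrm{ev}_{x_n}(ab)\| = 0$ and $\|\mathrm{ev}_{x_n}(a)\| \to \|\mathrm{ev}_{x_0}(a)\|$ for $a$ ranging over $\mathcal{F}$ and products thereof, so $\psi_n$ is asymptotically multiplicative on $\mathcal{F}$ with $\|\psi_n(a)\| \to \|\mathrm{ev}_{x_0}(a)\|$. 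The elements of $\mathcal{F}$ and their products that witness the obstruction at $x_0$ see only their values in $A_{x_0}$; but $\psi_n$ factors through $A_{x_n}$, so a further comparison is needed. This is resolved by choosing $\mathcal{F}$ to consist of genuine sections and observing that $\psi_n$ restricted to the finite-dimensional operator system spanned by $\mathcal{F}$ and closing up, together with the norm convergence, eventually supplies a c.c.p.\ approximation compatible with $A_{x_0}$ up to an arbitrarily small error, which contradicts the fixed positive obstruction $(\mathcal{F}, \delta, \epsilon)$ at $x_0$. Making this last compatibility estimate rigorous---controlling how well an almost-multiplicative map that factors through a nearby fiber approximates the target fiber---is where the real work lies, and I would lean on the continuity of the field norms to close it.
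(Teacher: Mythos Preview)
Your overall strategy---using Voiculescu's local characterization together with continuity of the fiber norms---is the right one, and your diagnosis of the obstacle is accurate: the maps $\psi_n = \phi_n \circ \mathrm{ev}_{x_n}$ are defined on $A$, not on $A_{x_0}$, and there is no canonical descent. But your proposed resolution (``restricting $\psi_n$ to the finite-dimensional operator system spanned by $\mathcal{F}$ \dots\ eventually supplies a c.c.p.\ approximation compatible with $A_{x_0}$'') is not a proof as it stands. Restricting $\psi_n$ to an operator system $S' \subset A$ still yields a map on $A$, not on $A_{x_0}$; to obtain a c.c.p.\ map on $A_{x_0}$ you must invert the quotient $\pi_{x_0}$ on a finite piece \emph{completely positively}, and that is a lifting theorem, not a consequence of norm continuity. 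You have correctly located where the real work lies, but you have not supplied the tool that does it.

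The missing ingredient is precisely the Choi--Effros lifting theorem, and the paper applies it in a way that dispenses with your contradiction argument altogether. The paper argues directly: take $y \in \overline{\XQD}$, fix a finite $\mathcal{F}\subset A_y$ and $\epsilon>0$, and use nuclearity of $A_y$ with Choi--Effros to get a c.c.p.\ section $\psi\colon A_y \to A$ of $\pi_y$. Continuity of $x\mapsto\|\pi_x(\tilde a)\|$, applied to the finitely many elements $\psi(a)$ and $\psi(ab)-\psi(a)\psi(b)$ for $a,b\in\mathcal{F}$, produces an open neighborhood of $y$ on which $\pi_x\circ\psi\colon A_y\to A_x$ is $\epsilon$-isometric and $\epsilon$-multiplicative on $\mathcal{F}$; pick $z\in\XQD$ in that neighborhood and compose $\pi_z\circ\psi$ with the matrix approximations for $A_z$. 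The Choi--Effros lift is exactly the transport mechanism you were searching for; once it is in hand, the sequence $(x_n)$, the contradiction setup, and the operator-system manoeuvre all become unnecessary.
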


\begin{proof}
  Let $y\in \overline{\XQD}$.  Fix a finite subset $\mathcal{F}$ of
  $A_y$ and $\epsilon > 0$.  For $x\in X$ write $\pi_x\colon A\to A_x$
  for the quotient map.  Because $A_y$ is nuclear, the Choi-Effros
  lifting theorem affords us a contractive completely positive lift
  $\psi\colon A_y\to A$ of $\pi_y$.  Using the fact that $x\mapsto
  \norm{ \pi_x(\tilde{a}) }$ is continuous for every $\tilde{a}\in A$
  we see that the sets
  \[
  U = \bigcap_{a\in \mathcal{F}}
  \{ x\in X : \norm{ \pi_x( \psi(a) ) } > \norm{a} - \epsilon \}
  \]
  and
  \[
  V = \bigcap_{(a,b)\in \mathcal{F}\times\mathcal{F}}
  \{ x\in X : \norm{ \pi_x( \psi(ab) - \psi(a)\psi(b) ) } < \epsilon \}
  \]
  are finite intersections of open sets containing $y\in
  \overline{\XQD}$.  Then there exists $z\in \XQD\cap U\cap
  V$.

  Let $\phi = \pi_z\circ \psi\colon A_y\to A_z$.  This is a completely
  positive contraction satisfying \( \norm{ \phi(a) } > \norm{ a } -
  \epsilon \) and $\norm{ \phi(ab) - \phi(a)\phi(b) } < \epsilon$ for
  all $a,b\in \mathcal{F}$.  It follows from
  Theorem~\ref{thm:voiculescu-charac-qd} that $A_y$ is quasidiagonal.
\end{proof}

The integer Heisenberg group $\mathbb{H}_3$ has a central subgroup
isomorphic to $\Z$ such that the corresponding quotient of
$\mathbb{H}_3$ is $\Z^2$.  Therefore $\mathbb{H}_3$ satisfies the
conditions of Proposition~\ref{prop:abelian-by-abelian-sQD}.  One
could also consider the case when $\Gamma$ has a central subgroup
$\Delta$ such that $\Gamma/\Delta \cong \mathbb{H}_3$.

\begin{proposition}
  Suppose $\Gamma$ has a finitely generated central subgroup $\Delta$
  such that $\Gamma/\Delta$ is a finitely generated, torsion-free,
  two-step nilpotent group with rank one center.  Then $C^*(\Gamma)$
  is strongly quasidiagonal.
\end{proposition}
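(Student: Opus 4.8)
The plan is to realize $C^*(\Gamma)$ as a nuclear continuous field over the dual of its center whose fibers are governed by rotation algebras, and then to lift strong quasidiagonality from the fibers to the whole algebra via the quotient characterization of strong quasidiagonality.

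First I would record the structure of $\Gamma$. Put $G=\Gamma/\Delta$; by hypothesis $G$ is finitely generated, torsion-free and two-step nilpotent with $Z(G)$ of rank one, so $Z(G)\cong\Z$ and $G/Z(G)$ is finitely generated abelian. The preimage of $Z(G)$ in $\Gamma$ is a central extension of $\Z$ by the central subgroup $\Delta$, hence abelian, and it contains $Z(\Gamma)$; thus $Z(\Gamma)$ is a finitely generated abelian group. Also $\Gamma$ is finitely generated nilpotent, so it is amenable and $C^*(\Gamma)$ is separable and nuclear.

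Next, because $Z(\Gamma)$ is central and finitely generated abelian, $C^*(Z(\Gamma))\cong C(\widehat{Z(\Gamma)})$ lies in the center of $C^*(\Gamma)$, exhibiting $C^*(\Gamma)$ as a separable nuclear continuous field over the compact metrizable space $X=\widehat{Z(\Gamma)}$ with fibers the twisted group C*-algebras $A_\chi\cong C^*\big(\Gamma/Z(\Gamma),\omega_\chi\big)$, where $\omega_\chi$ is the $\T$-valued two-cocycle determined by $\chi$. The rank-one hypothesis means the commutator form takes values in $Z(G)\cong\Z$, so a single parameter $\theta(\chi)$ read off $\chi$ governs each fiber: for a dense set of ``rational'' characters the cocycle $\omega_\chi$ has finite order and $A_\chi$ is subhomogeneous, hence strongly quasidiagonal, while for the remaining characters nondegeneracy of the relevant commutator form makes $A_\chi$ simple. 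By Lemma~\ref{lem:cont-field-QD-fibers} the set of $\chi$ with quasidiagonal fiber is closed, and as it contains the dense rational set it is all of $X$; in particular the simple fibers are quasidiagonal, and being simple they are strongly quasidiagonal. Thus every fiber $A_\chi$ is strongly quasidiagonal.

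To conclude I would use that a separable C*-algebra is strongly quasidiagonal if and only if all of its quotients are quasidiagonal. Any quotient $C^*(\Gamma)/I$ still carries the central copy of $C(\widehat{Z(\Gamma)})$, so it is a continuous field over a closed subset of $X$ whose fibers are quotients of the $A_\chi$, and these are quasidiagonal since the $A_\chi$ are strongly quasidiagonal. A separable nuclear continuous field all of whose fibers are quasidiagonal is itself quasidiagonal---one composes the fiberwise completely positive approximations with Choi--Effros lifts and patches them over a finite cover of the base with a partition of unity, exactly as in the proof of Lemma~\ref{lem:cont-field-QD-fibers}, to produce the maps required by Theorem~\ref{thm:voiculescu-charac-qd}. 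Hence every $C^*(\Gamma)/I$ is quasidiagonal and $C^*(\Gamma)$ is strongly quasidiagonal. The heart of the argument, and the main obstacle, is the fiber analysis of the third paragraph: the cocycle bookkeeping identifying the rational fibers as subhomogeneous algebras and the generic fibers as simple rotation-type algebras via nondegeneracy of the commutator form. The secondary point to handle with care is the permanence statement used at the end---that a nuclear continuous field with quasidiagonal fibers is quasidiagonal---which is not literally Lemma~\ref{lem:cont-field-QD-fibers} but follows from the same Choi--Effros-plus-partition-of-unity mechanism.
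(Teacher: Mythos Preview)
Your overall architecture---realize $C^*(\Gamma)$ as a continuous field over the dual of a central subgroup, analyze the fibers, and then pass from fiberwise to global strong quasidiagonality---is exactly the paper's strategy. The gap is in the fiber analysis.

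You decompose over $\widehat{Z(\Gamma)}$ and assert that the fibers $A_\chi=C^*(\Gamma/Z(\Gamma),\omega_\chi)$ are governed by a single rotation parameter, splitting into ``subhomogeneous'' (rational) and ``simple'' (irrational) types. But $\Gamma/Z(\Gamma)$ need not be abelian. Take $\Gamma$ to be the free class-$3$ nilpotent group on two generators and $\Delta=\gamma_3(\Gamma)=Z(\Gamma)\cong\Z^2$; then $G=\Gamma/\Delta\cong\mathbb{H}_3$ satisfies all the hypotheses, yet $\Gamma/Z(\Gamma)=\mathbb{H}_3$ is two-step nilpotent. For the trivial character $\chi$ the fiber is $C^*(\mathbb{H}_3)$, which is neither subhomogeneous nor simple, so your dichotomy fails outright. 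More generally, the twisted algebras $C^*(H(d_1,\dots,d_n),\sigma)$ are \emph{not} determined by a single parameter: for $n>1$ they are themselves nontrivial continuous fields (never simple), and for $n=1$ the cocycle lives in $\T^2$ and the torsion case yields algebras stably isomorphic to noncommutative tori (not subhomogeneous in general). This is precisely the content of the Lee--Packer structure theorems the paper invokes, and it forces the case split $n=1$ versus $n>1$ that you do not have.

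The paper decomposes over $\widehat{\Delta}$ (so the fibers are twisted algebras of the fixed Heisenberg-type group $G\cong H(d_1,\dots,d_n)$), then for $n>1$ pushes finite decomposition rank through the Lee--Packer bundle description, while for $n=1$ it uses Lee--Packer's simple/NC-torus dichotomy together with Lemma~\ref{lem:cont-field-QD-fibers} to get all fibers strongly quasidiagonal. For the endgame the paper does not use the permanence statement you sketch; instead it observes that every primitive quotient of $C^*(\Gamma)$ is a primitive quotient of some fiber, hence quasidiagonal, and then cites Hadwin's criterion (primitive quotients quasidiagonal $\Rightarrow$ strongly quasidiagonal). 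Your partition-of-unity argument for ``continuous field with QD fibers is QD'' is plausible over a compact finite-dimensional base but is more than what is needed here.
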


\begin{proof}
  As noted in Section~2 of \cite{Lee-Packer95}, the analysis of
  \cite[Corollary~3.4]{Baggett-Packer94} shows that every discrete,
  finitely generated, torsion-free, two-step nilpotent group with rank
  one center is isomorphic to a ``generalized discrete Heisenberg
  group'' $H(d_1, \dots, d_n)$.  If $n$ is a positive integer and
  $d_1, \dots, d_n$ are positive integers with $d_1 | \dots | d_n$,
  then $H(d_1, \dots, d_n)$ is defined as the set $\Z \times \Z^n
  \times \Z^n$ with multiplication
  \begin{displaymath}
    (r,s,t)\cdot(r',s',t') = (r+r' + \sum d_{i}t_{i}s_{i}', s+s', t+t').
  \end{displaymath}

  Write $\Gamma/\Delta \cong H(d_1, \dots, d_n)$.

  \emph{Case $n>1$.}  It follows from Theorem~3.4 of
  \cite{Lee-Packer95} that every twisted group algebra
  $C^*(\Gamma/\Delta, \sigma)$ of $\Gamma/\Delta$ is isomorphic to the
  section algebra of a continuous field of C*-algebras over a
  one-dimensional space with each fiber stably isomorphic to a
  noncommutative torus of dimension at most $2n$.  Every
  noncommutative torus of dimension at most $2n$ has decomposition
  rank at most $4n + 1$ (by \cite[Lemma~4.4]{Carrion11}) and
  decomposition rank is invariant under stable isomorphism, so
  Lemma~4.1 of \cite{Carrion11} implies that $C^*(\Gamma/\Delta,
  \sigma)$ has finite decomposition rank.  Now, by Theorem~1.2 of
  \cite{Packer-Raeburn92} we have that $C^*(\Gamma)$ is itself a
  continuous field C*-algebra over the finite-dimensional space
  $\hat{\Delta}$ with every fiber isomorphic to some twisted group
  C*-algebra of $\Gamma/\Delta$.  By Lemma~4.1 of \cite{Carrion11} we
  obtain that $C^*(\Gamma)$ has finite decomposition rank and is
  therefore strongly quasidiagonal.

  \emph{Case $n=1$.}  Write $H$ for $H(d_1)$.  There is an isomorphism
  $H^2(H, \T) \cong \T^2$ such that whenever a multiplier $\sigma$
  corresponds to $(\lambda, \mu)\in \T^2$ with both of $\lambda$ and
  $\mu$ torsion elements, then the twisted group C*-algebra $C^*(H,
  \sigma)$ is stably isomorphic to a noncommutative torus
  \cite[Theorem~3.9]{Lee-Packer95}.  When at least one of $\lambda$ or
  $\mu$ is non-torsion, we have that $C^*(H, \sigma)$ is simple and
  has a unique trace \cite[Theorem~3.7]{Lee-Packer95}.  Now, there is
  a continuous field of C*-algebras over $H^2(H, \T)$ where the fiber
  over (the class of) a given multiplier $\sigma$ is $C^*(H, \sigma)$
  \cite[Corollary~1.3]{Packer-Raeburn92}.  Since the fibers are
  quasidiagonal over a dense set of points, every fiber must be
  quasidiagonal (by Lemma~\ref{lem:cont-field-QD-fibers}).  In fact,
  every fiber must be strongly quasidiagonal, owing either to
  simplicity or to having finite decomposition rank.

  By Theorem~1.2 of \cite{Packer-Raeburn92} we have that $C^*(\Gamma)$
  is the section algebra of a continuous field of C*-algebras with
  strongly quasidiagonal fibers (the fibers are of the form $C^*(H,
  \sigma)$).  Therefore, every primitive quotient of $C^*(\Gamma)$ is
  strongly quasidiagonal, since it is a primitive quotient of some
  fiber of the field \cite[Theorem~10.4.3]{Dixmier77}.  It follows
  from Proposition~5 of \cite{Hadwin87} that $C^*(\Gamma)$ is strongly
  quasidiagonal.
\end{proof}

\begin{citethm}[Conjecture]
  If $\Gamma$ is a finitely generated countable discrete nilpotent
  group, then $C^*(\Gamma)$ is strongly quasidiagonal.
\end{citethm}

A group is \emph{supramenable} if it contains no paradoxical subsets.
(A subset is paradoxical if it admits a paradoxical decomposition as
in \ref{subsec:quant-vers-rosenberg}.)  For solvable groups, this is
the same as saying it has a nilpotent subgroup of finite index.  Does
every countable discrete supramenable group have a (strongly)
quasidiagonal C*-algebra?

\subsection*{Added in proof}
The third author has announced \cite{Eckhardt13} a proof of the above
conjecture (in an even stronger form).

\bibliographystyle{abbrv}
% \bibliography{general-references}

\end{document}